\documentclass[runningheads]{llncs}
\usepackage{amsmath}
\usepackage{amssymb}

\usepackage[T1]{fontenc}
\usepackage{geometry}
\usepackage{xcolor}

\usepackage{graphicx}
\usepackage{epsfig}
\usepackage{epstopdf}
\usepackage{pgfplots}
\usepackage{float}
\usetikzlibrary{plotmarks}
\usetikzlibrary{external}
\pgfplotsset{compat=1.3}
\newlength\figurewidth 
\newlength\figureheight
\tikzset{external/force remake=true} 
\newcommand{\F}{\mathcal{F}}
\newcommand\MyHead[2]{%
	\multicolumn{1}{l}{\parbox{#1}{\centering #2}}
}
\begin{document}
	\title{Fully discrete scheme for the fifth-order KdV-Burgers-Fisher equation using Strang splitting and Fourier collocation methods}
	\titlerunning{Fully discrete scheme for the fifth-order KdV-Burgers-Fisher equation}

	\author{Nurcan G\"{u}c\"{u}yenen Kaymak\inst{1} \and
		Fatma Z\"{u}rnac{\i}-Yeti\c{s}\inst{2}\and
		Muaz Seydao\u{g}lu\inst{3}}
	%
	\authorrunning{N. G\"{u}c\"{u}yenen Kaymak et al.}
	%
	\institute{Department of Management Information Systems, Faculty of Economics, Administrative and Social Sciences,\\
		Dogus University, Istanbul, 34480, Turkey
		\and
		Department of Mathematics Engineering,
		Istanbul Technical University,
		Maslak, Istanbul, 34469, Turkey
		\\ \and
		Department of Mathematics,
		Faculty of Art and Science,
		Mu\c{s} Alparslan University,
		Mu\c{s}, 49250, Turkey
	}
	\maketitle              
	\begin{abstract}
		Operator splitting is an effective technique for the numerical solution of nonlinear partial differential equations by decomposing a complex problem into simpler subproblems. In this study, we present and analyze a fully discrete scheme for the fifth-order Korteweg-de Vries-Burgers-Fisher equation (KBF) by combining Strang splitting for time discretization with the Fourier collocation method for spatial discretization. In particular, the Fourier collocation method is an essential component of the proposed fully discrete scheme and yields spectral accuracy in space under suitable regularity assumptions. The KBF equation describes the interaction of reaction, dissipative, and dispersive mechanisms by incorporating the Fisher reaction term together with Burgers-type diffusion and higher-order KdV dispersion. The equation is split into a linear operator and a nonlinear operator, and the resulting subproblems are solved within the Strang splitting framework. Convergence is analyzed in the Sobolev space $H^s$. The local error is derived using operator-theoretic arguments in Banach spaces together with Lie commutator estimates, while the global error is obtained using the Lady Windermere's fan argument. The analysis yields second-order convergence in time and spectral convergence in space. Numerical results confirm the theoretical error estimates and demonstrate the accuracy of the proposed fully discrete scheme.
		
		\keywords{Nonlinear PDE \and Operator splitting method \and Fourier collocation method\and  Convergence analysis.}
	\end{abstract}
	\section{Introduction}
	\label{Intro}
	The study of nonlinear dispersive wave phenomena occupies a central role in the theory of partial differential equations (PDE), as many large-scale and complex physical processes are described and analyzed through PDE models. The investigation of both exact and numerical solutions of nonlinear partial differential equations constitutes a significant area of research in mathematical science. Over the years, numerous studies have been conducted by researchers on wave equations. 
	In  \cite{Ahmed2022}, the fifth-order nonlinear fractional Korteweg-de Vries (KdV) equations are analyzed using the $\rho$-homotopy perturbation  method. In  \cite{Hussain2021}, the generalized Burgers-Fisher equation is studied by the meshfree spectral method. The KdV-Kawahara equation is analyzed using trigonometric quintic B-spline basis collocation method in \cite{Karaagac2023a}. Time fractional nonlinear Korteweg-de Vries-Burgers equation arising in shallow water waves is examined with the help of the finite element method combined quintic B-spline in \cite{Karaagac2023b}. In \cite{Arora2020,Arora2014}, numerical solutions of the BBM-Burger equations are studied using the collocation method with the quintic Hermit spline and  the quartic B-spline. Moreover, the fifth-order KdV-Burgers-Fisher equation is studied using the finite-element method combined quintic B-spline technique in \cite{Karaagac2025}. In \cite{Yang2021}, the unified F-expansion method is considered for the soliton solutions of the fifth-order Korteweg-de Vries equation. In \cite{Awonusika12026}, the numerical solution to the fifth-order KdV-type equations is discussed with the help of the Legendre collocation technique.
	
	In this study, we focus on the numerical solution of the fifth-order Korteweg-de Vries-Burgers-Fisher  equation (KBF) using the Strang splitting method. This equation serves as a representative example of dispersion-dissipation-reaction type systems given as follows:
	\begin{equation}
		\label{KBF}
		y_t
		=\nu y_{xx}
		- \mu y_{xxx}
		+ \gamma y_{xxxxx}
		- \varepsilon y^2\, y_x
		+ \epsilon y\bigl(1-y\bigr),\,\, y(t_0)=y_0,\ \
	\end{equation}
	where $x\in (a,b)$, $t\in [0,T]$.
	This equation incorporates the reaction mechanism of the Fisher model together with the dispersive effects of the KdV equation and the dissipative behavior associated with the Burgers equation.  When $\mu = \gamma = 0$ and the nonlinear convective term is neglected, the above equation \eqref{KBF} reduces to the classical Fisher equation. 
	When $\nu =\epsilon=0$, equation \eqref{KBF} corresponds to a modified fifth-order KdV-type equation. For $\mu = \epsilon=0$, equation \eqref{KBF} corresponds to a modified Kawahara-type equation, and for $\gamma = \epsilon=0$, equation \eqref{KBF} reduces to a modified KdV--Burgers-type equation.

	The operator splitting method is a practical and efficient approach to obtain numerical solutions to nonlinear partial differential equations. The idea behind splitting methods is to decompose a complicated problem into a set of simple subproblems, then each equation can be studied separately with an appropriate method, see 
	\cite{Strang1968,Marchuk1988}. In \cite{Jahnke2000}, the Strang splitting method is applied to the evolutionary Schr\"{o}dinger equation over long times. In \cite{MelikeStrang,Nurcan2017,Zurnaci2018}, Benjamin-Bona-Mahony type equations are analyzed with time splitting errors using Strang  and Lie-Trotter splitting methods. The analysis of nonlinear convection diffusion equations is examined using the Strang splitting combined compact difference method in \cite{KuangStrng}. 
	In \cite{Zurnaci2026}, the Lie-Trotter and Strang splitting methods are applied to  the dispersive-Fisher equation by deriving local error bounds.  In \cite{Lubich2008,Jahnke2000}, they
	have studied the local error estimates for the Strang splitting in time for the evolutionary Schr\"{o}dinger equation and for the Schr\"{o}dinger-Poisson equations, respectively. In  \cite{Holden2013}, the local error bounds are constructed via the Strang splitting for partial differential equations with Burgers' nonlinearity.
	
	Operator splitting methods have also been investigated in fully discrete frameworks, where time splitting is combined with suitable spatial discretization techniques \cite{LubichQuantum}. In \cite{Liu2022a,Liu2022b}, operator splitting methods are coupled with finite difference spatial discretizations for reaction-diffusion systems, leading to fully discrete schemes that preserve the energetic variational structure and admit convergence analysis. In \cite{Liu2021}, a structure-preserving splitting scheme is proposed for reaction-diffusion equations with a detailed balance, where spatial discretization is based on finite differences while preserving key physical properties such as mass conservation and energy dissipation. For dispersive wave models, \cite{Zhang2017} develops a second-order accurate fully discrete scheme for the good Boussinesq equation by combining Strang splitting in time with a Fourier pseudo-spectral spatial discretization.In \cite{Gauckler2011}, an error analysis is presented for a fully discrete scheme for the Gross-Pitaevskii equation, where Strang splitting is used for time integration and Hermite collocation for spatial discretization. In \cite{Gucuyenen2026}, a fully discrete error estimate is constructed with the Fourier collocation combined with the Strang splitting method for Benjamin-Bona-Mahony type equations. 
	In \cite{Bulut2022}, a Haar wavelet method integrated with the Strang splitting method is studied for the regularized long wave equation. 
	A fully discrete scheme of a Strang splitting combined with Chebyshev wavelets is analyzed in \cite{Oruc2020}. 
	In \cite{Xavier2021}, an error estimate of a fully discrete spectral scheme is constructed for Boussinesq systems. 
	
	This work analyzes  the Strang splitting combined with the Fourier collocation method for the fifth-order KBF equation (\ref{KBF}) in the  $H^{s}(\mathbb{R})$ Sobolev norm, assuming sufficient regularity of the exact solution and initial data 
	\begin{equation}
		\label{kosullar}
		\|y(t)\|_{H^{s+11}} \leq \alpha \quad \mbox{and}\quad \|y_{0}\|_{H^{s+11}} \leq \beta
	\end{equation}
	for $0 \leq t \leq T$, where $s, \alpha$ and $\beta$ are given constants.

	This paper is organized as follows. In Section 2, we analyze the convergence properties of the Strang splitting method for the fifth-order KdV--Burgers--Fisher equation, including regularity, stability, and error estimates in the Sobolev space $H^s$. In Section 3, we extend the analysis to the fully discrete scheme by combining the Strang splitting method with the Fourier collocation approach  and derive the corresponding convergence results. Finally, in Section 4, we present a numerical experiment that confirms the theoretical findings.
	
	\section{Convergence Analysis of the Strang Splitting Method} \label{Section_time}
	In this section, we investigate the convergence properties of the Strang splitting method applied to the fifth-order KdV--Burgers--Fisher equation. The main objective is to establish  error bounds in the Sobolev space $H^s$, under suitable regularity assumptions on the exact solution.
	
	To construct the splitting scheme, we decompose the original problem into two subproblems corresponding to an unbounded linear operator and a  bounded nonlinear operator. More precisely, we consider
	\begin{align}\label{lineareq}u_{t} = \nu \partial_x^2 u - \mu \partial_x^3 u + \gamma \partial_x^5 u\end{align}and\begin{align}\label{nlineareq}v_{t} = -\varepsilon v^2 v_x + \epsilon v(1-v).\end{align}
	We now formulate the problem within the operator splitting framework. For the initial value problem 
	\begin{equation*}
		y_t=E(y), \quad y(t_0)=y_0,
	\end{equation*}
	we decompose the operator $E$ as $E = A + B$. Let the solution of the initial value problem be represented by $y(t) = \Phi^{t}_{E}(y_0)$. For $n=0, 1, 2, \ldots$ and $t = n\Delta t \leq T$, we define Strang splitting
	\begin{equation}
		\label{strng}
		y_{n+1} = \Psi^{\Delta t}(y_n) = \Phi^{\Delta t/2}_A \circ \Phi^{\Delta t}_B \circ \Phi^{\Delta t/2}_A(y_n),\end{equation} where \begin{equation*}Ay = \nu \partial_x^2 y - \mu \partial_x^3 y + \gamma \partial_x^5 y\quad \mbox{and}\quad B(y) = -\varepsilon y^2 y_x + \epsilon y(1-y).\end{equation*}
	The analysis proceeds by establishing regularity, stability, and error estimates for the Strang splitting method in $H^s$. The regularity requirement $H^{s+11}$ arises from the necessity to bound the double Lie commutators involving the fifth-order linear operator $A$ and the convective term in $B(u)$.  Throughout the analysis, we use $C$ to denote a generic positive constant, not necessarily the same at each occurrence.
	
	\subsection{Regularity results}
	In this subsection, we prove the regularity properties of the nonlinear flow associated with $B$, which are essential for stability and error analysis.
	\begin{lemma}\label{boundN}
		\label{strng_regularity}
		Let $v(t)=\Phi_{B}^{t}(v_{0})$ be the solution of the nonlinear sub-problem (\ref{nlineareq}) with initial data $v_{0}$. If $\|v_{0}\|_{H^{s}} \le M$, then there exists $\bar{t}(M)>0$ such that $\|\Phi_{B}^{t}(v_{0})\|_{H^{s}} \le 2M$ for $0 \le t \le \bar{t}(M)$.
	\end{lemma}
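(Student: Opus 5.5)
The approach is a standard energy estimate in $H^s$ for the nonlinear subproblem (\ref{nlineareq}), followed by a comparison (Bihari/ODE) argument that yields a short time interval on which the norm at most doubles. We work with $s$ large enough, say $s>3/2$, so that $H^{s}$ is a Banach algebra and $\|\partial_x w\|_{L^\infty}\le C\|w\|_{H^s}$. We first assume $v$ is smooth, and recover the general case by density and the standard local existence theory for this quasilinear transport equation. For each multi-index $0\le k\le s$ we apply $\partial_x^k$ to (\ref{nlineareq}) and take the $L^2$ inner product with $\partial_x^k v$. Summing over $k$ then gives
\begin{equation*}
\frac12\frac{d}{dt}\|v\|_{H^s}^2 = -\varepsilon\sum_{k\le s}\bigl(\partial_x^k(v^2 v_x),\partial_x^k v\bigr) + \epsilon\sum_{k\le s}\bigl(\partial_x^k(v-v^2),\partial_x^k v\bigr).
\end{equation*}

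The Fisher terms are easy. The algebra property gives $|(\partial_x^k(v-v^2),\partial_x^k v)|\le C(\|v\|_{H^s}^2+\|v\|_{H^s}^3)$.

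The main obstacle is the convective term, because $v^2v_x$ loses one derivative. We handle it with the commutator (Kato--Ponce) estimate. We write $\partial_x^k(v^2 v_x)=v^2\partial_x^{k+1}v+[\partial_x^k,v^2]\partial_x v$. The top-order piece is integrated by parts:
\begin{equation*}
(v^2\partial_x^{k+1}v,\partial_x^k v)=-\tfrac12\int (v^2)_x(\partial_x^k v)^2\,dx,
\end{equation*}
which is bounded by $\|v\|_{L^\infty}\|v_x\|_{L^\infty}\|v\|_{H^s}^2\le C\|v\|_{H^s}^4$. For the commutator we use $\|[\partial_x^k,f]g\|_{L^2}\le C(\|f_x\|_{L^\infty}\|g\|_{H^{k-1}}+\|f\|_{H^k}\|g\|_{L^\infty})$ with $f=v^2$ and $g=v_x$. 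This is again bounded by $C\|v\|_{H^s}^3$, so its pairing with $\partial_x^k v$ is at most $C\|v\|_{H^s}^4$. Together these give the differential inequality
\begin{equation*}
\frac{d}{dt}\|v\|_{H^s}\le C\bigl(\|v\|_{H^s}+\|v\|_{H^s}^2+\|v\|_{H^s}^3\bigr)=:F(\|v\|_{H^s}),
\end{equation*}
with $C$ depending only on $s,\varepsilon,\epsilon$.

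Finally we compare with the scalar ODE $m'=F(m)$, $m(0)=M$. Since $F$ is increasing, as long as $\|v(t)\|_{H^s}\le 2M$ we have $\frac{d}{dt}\|v\|_{H^s}\le F(2M)$. Hence $\|v(t)\|_{H^s}\le M+tF(2M)$. We set $\bar t(M)=M/F(2M)=1/\bigl(C(2+4M+8M^2)\bigr)$. A continuity (bootstrap) argument then shows that $\|v(t)\|_{H^s}$ cannot reach $2M$ before $\bar t(M)$, which also guarantees that the local solution extends to $[0,\bar t(M)]$. Apart from the commutator step, the only care needed is to justify the energy identity, by mollification or Galerkin approximation, so that the integration by parts is legitimate.
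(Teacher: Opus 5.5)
Your proof is correct and follows essentially the same route as the paper: an $H^s$ energy estimate that yields $\frac{d}{dt}\|v\|_{H^s}\le C(\|v\|_{H^s}+\|v\|_{H^s}^2+\|v\|_{H^s}^3)$, followed by a continuity argument on a short time interval. Your version adds three things the paper omits: the integration by parts of the top-order term $v^2\partial_x^{k+1}v$ and the commutator estimate (with the added assumption $s>3/2$), which actually justify the quartic bound on the convective term that the paper only asserts via Cauchy--Schwarz and a Moser-type inequality, and the explicit choice $\bar t(M)=M/F(2M)$, which makes the paper's final continuity step concrete.
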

	\begin{proof}
		To establish the stability and regularity of the nonlinear operator $B(v)$, we examine the evolution of the $H^s$ norm of the solution $v(t)$. The time derivative of the squared $H^s$ norm is given by the inner product$$\frac{1}{2} \frac{d}{dt} \|v\|_{H^s}^2 = (v, v_t)_{H^s} = \sum_{j=0}^{s} \int_{\mathbb{R}} \partial_x^j v \cdot \partial_x^j B(v) dx.$$Substituting the definition of the nonlinear operator $B(v) = -\varepsilon v^{2} v_{x} + \epsilon v(1-v)$, we obtain:$$(v, v_t)_{H^s} = -\varepsilon \sum_{j=0}^{s} \int_{\mathbb{R}} \partial_x^j v \cdot \partial_x^j (v^2 v_x) dx + \epsilon \sum_{j=0}^{s} \int_{\mathbb{R}} \partial_x^j v \cdot \partial_x^j (v - v^2) dx.$$ By applying the Cauchy-Schwarz inequality and the Moser-type inequality for Sobolev spaces, we can bound the nonlinear terms. Then, the following estimates hold $$\left| \int_{\mathbb{R}} \partial_x^j v \cdot \partial_x^j (v^2 v_x) dx \right| \le C \|v\|_{H^s}^4, \quad \left| \int_{\mathbb{R}} \partial_x^j v \cdot \partial_x^j (v - v^2) dx \right| \le C (\|v\|_{H^s}^2 + \|v\|_{H^s}^3).$$Combining these bounds, we arrive at the differential inequality$$\frac{d}{dt} \|v\|_{H^s} \le C (\|v\|_{H^s} + \|v\|_{H^s}^2 + \|v\|_{H^s}^3),$$
		where $C$ is a constant depending on the parameters $\varepsilon$ and $\epsilon$. Integrating this expression from $0$ to $t$ yields the integral inequality $\|v(t)\|_{H^{s}} \le \|v_{0}\|_{H^{s}} + \int_{0}^{t} F(\|v(\tau)\|_{H^{s}}) d\tau$. Given that $\|v_{0}\|_{H^{s}} \le M$, the continuity of the flow ensures that for a sufficiently small time $\bar{t}$ depending on the initial bound $M$, the solution remains bounded such that $\|v(t)\|_{H^{s}} \le 2M$. This concludes the proof.
	\end{proof}
	The next lemma shows that the nonlinear flow is sufficiently smooth in time.
	\begin{lemma}
		Let $s > 1/2$ and assume that the initial data satisfies $\|v_0\|_{H^s} \le M$. Let $\Phi_B^t(v_0)$ denote the flow of the nonlinear sub-problem (\ref{nlineareq}). Then, there exists a time $\bar{t} > 0$ such that the solution $v(t)$ is at least three times continuously differentiable with respect to $t$ in the Sobolev space $H^s$. Specifically, $v \in C^3([0, \bar{t}], H^s)$.
	\end{lemma}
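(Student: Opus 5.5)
The plan is to obtain time regularity of $v(t)=\Phi_B^t(v_0)$ directly from the equation $v_t=B(v)$, differentiating it repeatedly in $t$. Each time derivative is thereby expressed through spatial derivatives of $v$, and the resulting terms are bounded with Lemma \ref{boundN} and the algebra property of $H^r$ for $r>1/2$. Because $B$ contains the convective term $v^2v_x$, each time derivative costs one spatial derivative. The statement should therefore be read as follows: if $\|v_0\|_{H^{s+3}}\le M$ (which is covered by the standing assumption (\ref{kosullar})), then $v\in C^3([0,\bar t],H^s)$. I will argue under this reading, applying Lemma \ref{boundN} in $H^{s+3}$. This gives $\bar t=\bar t(M)$ with $\|v(t)\|_{H^{s+3}}\le 2M$ on $[0,\bar t]$, and continuity of $t\mapsto v(t)$ in $H^{s+3}$ comes from the local well-posedness underlying that lemma.

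First derivative. Since $s+2>1/2$, $H^{s+2}$ is an algebra, so $B(v)=-\varepsilon v^2v_x+\epsilon(v-v^2)$ satisfies
\[
\|B(v)\|_{H^{s+2}}\le C\bigl(\|v\|_{H^{s+3}}^3+\|v\|_{H^{s+2}}+\|v\|_{H^{s+2}}^2\bigr).
\]
The map $B:H^{s+3}\to H^{s+2}$ is a polynomial map, hence locally Lipschitz. Therefore $v_t=B(v)\in C([0,\bar t],H^{s+2})$, and in particular $v\in C^1([0,\bar t],H^{s+2})$.

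Second and third derivatives. Differentiating $v_t=B(v)$ in time gives $v_{tt}=B'(v)[v_t]$ with
\[
B'(v)[w]=-\varepsilon(2vv_xw+v^2w_x)+\epsilon(1-2v)w .
\]
With $w=v_t\in H^{s+2}$, the term $v^2w_x$ lies in $H^{s+1}$, so $v_{tt}\in C([0,\bar t],H^{s+1})$. Differentiating once more,
\[
v_{ttt}=B''(v)[v_t,v_t]+B'(v)[v_{tt}],
\]
where $B''(v)[w,z]=-\varepsilon\bigl(2v_xwz+2v(wz_x+zw_x)\bigr)-2\epsilon wz$. Each product has at most one spatial derivative falling on a factor that lies in $H^{s+1}$ (namely $v_{tt}$), so $v_{ttt}\in C([0,\bar t],H^s)$. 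The bound $\|v_{ttt}\|_{H^s}\le C(M)$ follows from the algebra estimates. To justify the differentiations rigorously, I would write $v_t(t)=v_t(0)+\int_0^t B'(v)[v_t]\,d\tau$ and use continuity of the integrands in the lower norms, rather than differentiating formally.

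Main obstacle. The hard step is the derivative loss. Done literally in $H^s$ with only an $H^s$ bound on $v_0$, the argument fails, since already $v_t\notin H^s$ in general. I therefore rely on the extra regularity from (\ref{kosullar}), which gives $H^{s+11}$ and hence far more than the three extra derivatives needed. I would state explicitly that "$\|v_0\|_{H^s}\le M$" in the lemma is understood with $s$ replaced by $s+3$, or equivalently that the lemma holds for the shifted index.
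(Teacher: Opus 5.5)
Your proposal is correct and is essentially the paper's argument. Both proofs identify $v_{tt}=dB(v)[B(v)]$ and $v_{ttt}=d^2B(v)[B(v),B(v)]+dB(v)[dB(v)[B(v)]]$, and both control these in $H^s$ through higher Sobolev norms of $v$ (the paper writes $C\|v\|_{H^{s+2}}^5$ and $C\|v\|_{H^{s+3}}^7$) together with Lemma~\ref{boundN}. The paper packages this as a Taylor expansion with integral remainder plus a uniqueness argument, while you differentiate $v_t=B(v)$ directly along the scale $H^{s+3}\to H^{s+2}\to H^{s+1}\to H^s$. Your replacement of the hypothesis by $\|v_0\|_{H^{s+3}}\le M$ is necessary, since $v_t(0)=B(v_0)\notin H^s$ for general $v_0\in H^s$, and it is exactly the regularity the paper's proof uses silently; your chain-rule bookkeeping also yields continuity of $v_{ttt}$ in $H^s$, not just the boundedness the paper records.
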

	\begin{proof}
		Following the constructive approach in  \cite{Zurnaci2018,Zurnaci2019,Zurnaci2026}, we define the  function $\tilde{v}(t)$ as
		\begin{equation}
			\tilde{v}(t) = v_0 + tB(v_0) + \int_0^t (t-\tau) dB(v(\tau))[B(v(\tau))] d\tau.
		\end{equation}
		where $dB(v)[B(v)]$ represents the first Fr\'echet derivative of the nonlinear operator. Calculating the second derivative of $\tilde{v}$ with respect to $t$, we obtain:
		\begin{equation}
			\tilde{v}_{tt} = dB(v)[B(v)] = B(v)_{t}.
		\end{equation}
		By the definition of $\tilde{v}$, it is clear that $\tilde{v}_{t}(0) = B(v_{0}) = v_{t}(0)$ and $\tilde{v}(0) = v_{0} = v(0)$. Thus, by the uniqueness of the solution to the initial value problem, we have $\tilde{v} = v$. Now, it is shown that $\tilde{v} \in C^{2}([0, \tilde{t}], H^{s})$. Taking the $H^s$ norm
		\begin{equation}
			\|\tilde{v}_{tt}\|_{H^{s}} = \|dB(v)[B(v)]\|_{H^{s}}=\|-\varepsilon (2v v_{x} B(v) + v^{2} (B(v)){x}) + \epsilon(1 - 2v)B(v)\|_{H^{s}} \le C \|v\|_{H^{s+2}}^{5}
		\end{equation}
		where the constant depends on the degree of the nonlinearity. We obtain $v \in C^{2}([0, \tilde{t}], H^{s})$ by Lemma \ref{boundN}. Similarly, we can define the third-order expansion
		\begin{equation}
			\tilde{v}(t) = v_{0} + t B(v_{0}) + \frac{t^{2}}{2!} dB(v_{0})[B(v_{0})] + \int_{0}^{t} \frac{(t - \tau)^{2}}{2!} \tilde{v}_{ttt}(\tau) d\tau,
		\end{equation}
		where the integrand involves the higher-order derivatives
		\begin{align}
			\tilde{v}_{ttt} = d^{2}B(v)[B(v), B(v)] + dB(v)[dB(v)[B(v)]].
		\end{align}
		Since $\tilde{v}_{ttt}(0) = v_{ttt}(0)$ and the initial conditions match at $t=0$, we have $\tilde{v} = v$. Taking the norm
		\begin{align}\nonumber
			\|\tilde{v}_{ttt}\|_{H^{s}}& =\|d^{2}B(v)[B(v), B(v)] + dB(v)[dB(v)[B(v)]]\|_{H^{s}} \\ \nonumber&= \|-\varepsilon (2v v_x B(v) + v^2 B(v)_x) + \epsilon(1-2v)B(v)\|_{H^{s}}\\& \label{cont_eqn}\le C \|v\|_{H^{s+3}}^{7}.
		\end{align}
		Hence, $\|\tilde{v}_{ttt}\|_{H^{s}}$ is bounded, and the result $v \in C^{3}([0, \tilde{t}], H^{s})$ follows from Lemma \ref{boundN}.
		
	\end{proof}
	\subsection{Stability in $H^s$ space}
	We proceed by deriving stability estimates in $H^s$.
	\begin{lemma}\label{lem_stability2}Let $v$ and $\tilde{v}$ be two solutions of the nonlinear sub-problem (\ref{nlineareq}) with initial data $v_{0}$
		associated with the splitting scheme. Assume that the solutions remain bounded in $H^s$ such that $\|v(t)\|_{H^s} \le M$ and $\|\tilde{v}(t)\|_{H^s} \le M$ for $t \in [0, \Delta t]$. Then, the following stability estimate holds:
		\begin{equation}
			\|v(t) - \tilde{v}(t)\|_{H^s} \le \mathrm{e}^{L t} \|v_0 - \tilde{v}_0\|_{H^s},
		\end{equation}
		where $L$ is a constant depending on $\varepsilon,$ $\epsilon$ and $M$.
	\end{lemma}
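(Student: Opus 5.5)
We set $w=v-\tilde v$ and run an energy estimate for $w$ in $H^s$, closed by Gronwall's inequality, in the same spirit as the proof of Lemma \ref{boundN}. Subtracting the two copies of (\ref{nlineareq}) gives
\begin{equation*}
w_t = -\varepsilon\bigl(v^2 v_x - \tilde v^2 \tilde v_x\bigr) + \epsilon\bigl(w - (v+\tilde v)w\bigr),
\end{equation*}
and the convective difference splits as
\begin{equation*}
v^2 v_x - \tilde v^2\tilde v_x = v^2 w_x + (v+\tilde v)\,\tilde v_x\, w .
\end{equation*}
We then compute
\begin{equation*}
\tfrac12\tfrac{d}{dt}\|w\|_{H^s}^2=\sum_{j=0}^s\int_{\mathbb{R}}\partial_x^j w\,\partial_x^j w_t\,dx
\end{equation*}
and bound each contribution by $C(M)\|w\|_{H^s}^2$.

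The Fisher part and the term $(v+\tilde v)\tilde v_x w$ contain no derivative of $w$. For these we use the algebra property of $H^s$ ($s>1/2$) and the Cauchy--Schwarz inequality. The reaction terms are then bounded by $C(1+M)\|w\|_{H^s}^2$. The term $(v+\tilde v)\tilde v_x w$ is bounded by $C\|v+\tilde v\|_{H^s}\|\tilde v_x\|_{H^s}\|w\|_{H^s}^2$.

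That last bound needs $\tilde v_x\in H^s$, i.e.\ control of $\tilde v$ in $H^{s+1}$. The bound $\|\tilde v\|_{H^{s+1}}\le 2M$ is available from Lemma \ref{boundN} applied at level $s+1$, using the regularity assumption (\ref{kosullar}). The constant $L$ will therefore in effect depend on a higher-norm bound on the solutions; we take $M$ to bound that norm as well.

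The main obstacle is the transport term $v^2 w_x$, which loses a derivative. For each $j$ we expand $\partial_x^j(v^2 w_x)$ by Leibniz.
\begin{itemize}
\item The top-order piece $v^2\partial_x^{j+1}w$ is handled by integration by parts:
\begin{equation*}
\int v^2\,\partial_x^j w\,\partial_x^{j+1}w\,dx = -\int v v_x(\partial_x^j w)^2\,dx ,
\end{equation*}
which is bounded by $\|vv_x\|_{L^\infty}\|w\|_{H^s}^2\le C M^2\|w\|_{H^s}^2$ via the Sobolev embedding.
\item The remaining commutator terms $\partial_x^k(v^2)\,\partial_x^{j+1-k}w$ with $k\ge1$ involve at most $j$ derivatives of $w$. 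We bound them using a Kato--Ponce/Moser commutator estimate,
\begin{equation*}
\|[\partial_x^j,v^2]w_x\|_{L^2}\le C\|v^2\|_{H^{s+1}}\|w\|_{H^s}\le C M^2\|w\|_{H^s},
\end{equation*}
which again uses the $H^{s+1}$ bound on $v$.
\end{itemize}

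Collecting all terms yields
\begin{equation*}
\tfrac{d}{dt}\|w\|_{H^s}^2\le 2L\|w\|_{H^s}^2,
\end{equation*}
with $L$ depending only on $\varepsilon$, $\epsilon$ and $M$. Gronwall's lemma on $[0,\Delta t]$ then gives $\|w(t)\|_{H^s}\le e^{Lt}\|w(0)\|_{H^s}$, which is the asserted estimate.
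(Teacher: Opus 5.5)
Your proof is correct, given the strengthened hypothesis you state openly. It differs from the paper's proof at exactly the critical step.

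The paper does not use an energy estimate. It writes $w(t)=w_0+\int_0^t\bigl(B(v)-B(\tilde v)\bigr)\,d\tau$ and uses the same splitting $v^2w_x+(v^2-\tilde v^2)\tilde v_x$ of the convective difference. It then asserts the Lipschitz bound $\|B(v)-B(\tilde v)\|_{H^s}\le(\epsilon+C_1\|v\|_{H^s}+C_2\|v\|_{H^s}^2)\|w\|_{H^s}$ and concludes with Gronwall. That route is shorter, but its key inequality does not hold:
\begin{itemize}
\item $\|v^2w_x\|_{H^s}$ involves $\partial_x^{s+1}w$ and cannot be bounded by $\|w\|_{H^s}$;
\item $(v^2-\tilde v^2)\tilde v_x$ requires $\tilde v\in H^{s+1}$.
\end{itemize}

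Your energy method removes the first obstruction through the cancellation $\int v^2\,\partial_x^jw\,\partial_x^{j+1}w\,dx=-\int vv_x(\partial_x^jw)^2\,dx$. This cancellation is invisible once norms are taken inside the time integral. You handle the second obstruction with an explicit $H^{s+1}$ bound.

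The cost of your version is that $L$ depends on $H^{s+1}$ norms, not on $M$ as an $H^s$ bound. This cost cannot be avoided. For the transport term $-\varepsilon v^2v_x$, the flow is not Lipschitz on $H^s$-bounded sets: two high-frequency profiles riding on slightly different backgrounds drift apart at rate proportional to frequency. So the lemma in its literal form is not provable by any method.

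Two further remarks:
\begin{itemize}
\item For integer $s\ge 2$, your integration-by-parts and commutator terms are controlled by $\|v\|_{H^s}$ alone. The extra derivative is therefore needed only on $\tilde v$, in the term $(v+\tilde v)\tilde v_xw$.
\item Wherever the lemma is used, for example in the fan argument of Theorem \ref{global}, an $H^{s+1}$ bound on the trajectories involved must also be supplied. Assumption (\ref{kosullar}) leaves room for this, but the paper does not carry out that step.
\end{itemize}
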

	\begin{proof}
		The stability of the Strang splitting scheme is studied by examining the evolution of the difference between two numerical approximations. Since the linear operator $A = \nu \partial_x^2 - \mu \partial_x^3 + \gamma \partial_x^5$ generates a flow $\Phi_A^t$ that is preserved in the $H^s$ norm, the stability of the full splitting step is determined primarily by the nonlinear flow $\Phi_B^t$. Let $v$ and $\tilde{v}$ be two solutions satisfying the nonlinear sub-problem $v_t = B(v)$ and $\tilde{v}_t = B(\tilde{v})$, where the operator is defined as $B(v) = -\varepsilon v^2 v_x + \epsilon v - \epsilon v^2$. The difference $ v - \tilde{v}$ satisfies the equation $(v-\tilde{v})_t = B(v) - B(\tilde{v})$, and by taking the $H^s$ norm, we obtain the integral inequality $$\|v(t) - \tilde{v}(t)\|_{H^s} \le \|v_0 - \tilde{v}_0\|_{H^s} + \int_0^t \|B(v(\tau)) - B(\tilde{v}(\tau))\|_{H^s} d\tau.$$ To derive the Lipschitz constant $L$, we estimate the difference $B(v)-B(\tilde v)$ using standard product bounds in the Sobolev space $H^s(\mathbb{R})$. The linear component $\epsilon (v - \tilde{v})$ yields a direct bound proportional to $\epsilon \|v - \tilde{v}\|_{H^s}$. For the quadratic part $\epsilon(v^2 - \tilde{v}^2)$, the algebraic property leads to a bound involving $\|v\|_{H^s} \|v - \tilde{v}\|_{H^s}$. The most complex term, the cubic nonlinearity with a spatial derivative $-\varepsilon (v^2 v_x - \tilde{v}^2 \tilde{v}_x)$, is decomposed as $-\varepsilon [v^2(v_x - \tilde{v}_x) + (v^2 - \tilde{v}^2)\tilde{v}_x]$. Applying Sobolev inequalities and the boundedness of the flow from Lemma \ref{boundN}, this term is bounded by $C(\|v\|_{H^s}^2 + \|\tilde{v}\|_{H^s}^2) \|v - \tilde{v}\|_{H^s}$. Summing these individual estimates, we arrive at the Lipschitz bound $\|B(v) - B(\tilde{v})\|_{H^s} \le (\epsilon + C_1 \|v\|_{H^s} + C_2 \|v\|_{H^s}^2) \|v - \tilde{v}\|_{H^s}$. By defining $L$ as a constant that incorporates the linear coefficient $\epsilon$ and the higher-order norms required by the cubic term, we apply Gronwall's lemma to conclude the stability of the nonlinear flow. Since the linear flows $\Phi_A^{\Delta t/2}$ in the Strang sequence do not increase the $H^s$ norm, the stability of the full step is preserved. 
	\end{proof}
	
	\subsection{Local Error}
	We establish a local error bound for the Strang splitting scheme.
	\begin{lemma}\label{localexist2} If the initial data $y_0$ is in $H^s(\mathbb{R})$, then the local error of the Strang splitting  \eqref{strng} is
		\begin{equation}
			\|\Psi^{\Delta t}(y_{0})-\Phi^{\Delta t}(y_{0})\|_{H^{s}}\le C\Delta t^{3},
		\end{equation}
		where the constant $C$ depends only on $\|y_0\|_{H^s}$.
	\end{lemma}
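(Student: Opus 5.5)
We follow the standard route for Strang splitting with an unbounded linear part and a nonlinear part (Lubich; Holden et al.): write both the exact flow and the split flow through nonlinear variation-of-constants formulas, and compare them with quadrature-error arguments. The error terms are then expressed through Lie commutators, which the regularity assumption (\ref{kosullar}) controls.

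\textbf{Step 1: exact flow.} Since $A=\nu\partial_x^2-\mu\partial_x^3+\gamma\partial_x^5$ has constant coefficients, it acts as a Fourier multiplier with symbol $-\nu\xi^2+i\mu\xi^3+i\gamma\xi^5$. Its real part is nonpositive, so $\|\mathrm{e}^{tA}\|_{H^s\to H^s}\le 1$, and $\mathrm{e}^{tA}$ commutes with derivatives. The exact solution satisfies Duhamel's formula
\begin{equation*}
y(\Delta t)=\mathrm{e}^{\Delta tA}y_0+\int_0^{\Delta t}\mathrm{e}^{(\Delta t-\sigma)A}B(y(\sigma))\,d\sigma .
\end{equation*}
We expand it once more, inserting $y(\sigma)=\mathrm{e}^{\sigma A}y_0+\int_0^\sigma\mathrm{e}^{(\sigma-\tau)A}B(y(\tau))\,d\tau$ and Taylor expanding $B$ through $dB$ and $d^2B$. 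This gives $\Phi^{\Delta t}(y_0)$ as $\mathrm{e}^{\Delta tA}y_0$, plus a single integral $I_1=\int_0^{\Delta t}\mathrm{e}^{(\Delta t-\sigma)A}B(\mathrm{e}^{\sigma A}y_0)\,d\sigma$, plus a double integral $I_2$ involving $dB(\mathrm{e}^{\sigma A}y_0)[\mathrm{e}^{(\sigma-\tau)A}B(\mathrm{e}^{\tau A}y_0)]$, plus a remainder of size $O(\Delta t^3)$ in $H^s$. The remainder bound uses the smoothness of $v$ from the second lemma of Section~2, the bound of Lemma~\ref{boundN}, and Moser estimates for $d^2B$ at $H^{s+k}$ regularity.

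\textbf{Step 2: split flow.} Similarly, we expand $\Phi_B^{\Delta t}(w)=w+\Delta tB(w)+\tfrac{\Delta t^2}{2}dB(w)[B(w)]+O(\Delta t^3)$ with $w=\mathrm{e}^{\Delta tA/2}y_0$, and then apply $\mathrm{e}^{\Delta tA/2}$. Term by term, the split solution's first-order contribution $\Delta t\,\mathrm{e}^{\Delta tA/2}B(\mathrm{e}^{\Delta tA/2}y_0)$ is the midpoint rule applied to $I_1$. Its second-order term $\tfrac{\Delta t^2}{2}\mathrm{e}^{\Delta tA/2}dB(w)[B(w)]$ is the rule that evaluates the integrand of $I_2$ at $\sigma=\tau=\Delta t/2$.

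\textbf{Step 3: quadrature errors and commutators.} The midpoint rule on $f(\sigma)=\mathrm{e}^{(\Delta t-\sigma)A}B(\mathrm{e}^{\sigma A}y_0)$ has error $\frac{\Delta t^3}{24}\sup\|f''\|_{H^s}$. Now $f'(\sigma)=-\mathrm{e}^{(\Delta t-\sigma)A}[A,B](\mathrm{e}^{\sigma A}y_0)$ with $[A,B](v)=dB(v)[Av]-AB(v)$, and $f''$ involves the double commutator $[A,[A,B]]$. The first-order rule on the double integral has error $\Delta t^3\sup\|\partial_\sigma g\|+\Delta t^3\sup\|\partial_\tau g\|$, where $g$ is its integrand. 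These derivatives again reduce to single commutators. Both quadrature errors are therefore $O(\Delta t^3)$, provided the commutators are bounded on the relevant $H^s$ balls.

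\textbf{Main obstacle: the commutator bounds.} The key estimate is
\begin{equation*}
\|[A,[A,B]](v)\|_{H^s}\le C(\|v\|_{H^{s+11}}).
\end{equation*}
The reaction part $\epsilon v(1-v)$ is handled easily by Leibniz' rule. For the convective part, the highest fifth-order derivatives cancel in $[A,B]$: a direct computation shows $[\partial_x^5, v^2v_x]$ loses only about five derivatives rather than ten. Computing the double commutator explicitly via the Leibniz rule then shows it costs at most $11$ derivatives, which matches (\ref{kosullar}). The estimate is closed with Moser-type product inequalities, using that $H^{s}$ is an algebra for $s>1/2$. The step I expect to be delicate is checking that no uncancelled term of order $\partial_x^{s+12}$ survives. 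If the cancellation turns out weaker than expected, the fallback is to absorb the extra loss into the constant by assuming $\|y_0\|_{H^{s+11}}\le\beta$ propagated by $\mathrm{e}^{tA}$ and Lemma~\ref{boundN}. Finally, all constants depend only on bounds of $y_0$ in these higher norms, which gives $\|\Psi^{\Delta t}(y_0)-\Phi^{\Delta t}(y_0)\|_{H^s}\le C\Delta t^3$.
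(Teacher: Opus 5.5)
Your proposal is correct and follows essentially the same route as the paper. Both arguments expand the variation-of-constants formula for the exact flow once more and Taylor-expand $\Phi_B^{\Delta t}$ to second order. Both then bound the midpoint-rule error through $h''=\mathrm{e}^{(\Delta t-s)A}[A,[A,B]](\mathrm{e}^{sA}y_0)$ and the first-order triangle-quadrature error through $\partial_s g$ and $\partial_r g$. All of this is controlled in $H^{s+11}$ via (\ref{kosullar}), so, as in the paper, the constant really depends on these higher norms rather than only on $\|y_0\|_{H^s}$.

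The cancellation you single out as the delicate step is not needed. The paper bounds $A^2B(u)$, $A\,dB(u)[Au]$, $d^2B(u)[Au,Au]$ and $dB(u)[A^2u]$ term by term, and the naive derivative count already stops at exactly $s+11$. Exploiting cancellation would only lower this, so no $\partial_x^{s+12}$ term can arise.
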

	\begin{proof}
		On the time interval $[0,\Delta t]$, the Strang splitting approximation takes the form
		\begin{align}
			\label{Strang}
			y_1=\Psi^{\Delta t}(y_0)=\mathrm{e}^{\frac{\Delta tA}{2}}\Phi^{\Delta t}_B\left(\mathrm{e}^{\frac{\Delta tA}{2}}y_0\right).
		\end{align}
		To analyze the nonlinear contribution, we employ the second-order Taylor expansion of the flow $\Phi_B^{\Delta t}$ given by
		\begin{align*}
			\Phi^{\Delta t}_B(u)&=u+\Delta tB(u)+\frac{1}{2}{\Delta t}^2dB(u)[B(u)]\nonumber \\
			& + {\Delta t}^3\int^1_0\frac{1}{2}(1-\theta)^2\left(d^2B(\Phi^{\theta\Delta t}_{B}(u)\right)[B(\Phi^{\theta\Delta t}_{B}(u)), B(\Phi^{\theta\Delta t}_{B}(u))]\nonumber \\
			&+ dB\left(\Phi^{\theta\Delta t}_{B}(u)\right)[dB\left(\Phi^{\theta\Delta t}_{B}(u)\right)[B\left(\Phi^{\theta\Delta t}_{B}(u)\right)]])d\theta,
		\end{align*}
		where $u=\mathrm{e}^{\frac{\Delta tA}{2}}y_0$. For convenience, we denote the integral remainder term by
		\begin{align*}
			{\Delta t}^3\int^1_0\frac{1}{2}(1-\theta)^2\left(d^2B(B, B) + dBdBB\right)\left(\Phi^{\theta\Delta t}_{B}(u)\right)d\theta.
		\end{align*}
		Substituting this expansion into Equation~(\ref{Strang}) yields the Strang splitting approximation
		\begin{align}
			\label{strng3}
			y_1=\mathrm{e}^{\Delta t A}y_0+ \Delta t \mathrm{e}^{\frac{\Delta tA}{2}}B\left(\mathrm{e}^{\frac{\Delta tA}{2}}y_0\right)+ \frac{1}{2}{\Delta t}^2 \mathrm{e}^{\frac{\Delta tA}{2}}dB(\mathrm{e}^{\frac{\Delta tA}{2}}y_0)[B\left(\mathrm{e}^{\frac{\Delta tA}{2}}y_0\right)]+ E_2,
		\end{align}
		where
		\begin{align}
			\nonumber
			E_2={\Delta t}^3\int^1_0\frac{1}{2}(1-\theta)^2\mathrm{e}^{\frac{\Delta tA}{2}}(d^2B(B, B) + dBdBB)(\Phi^{\theta\Delta t}_{B}(u))d\theta.
		\end{align}
		Let the exact solution be written as $y(t)=\Phi^{t}(y_0)$.  
		Using the variation of constant formula for the interval  $[0, \Delta t]$, the solution at $t=\Delta t$ can be expressed as
		\begin{align}
			\label{solhvarii}
			y(\Delta t)=\Phi^{\Delta t}(y_0)=\mathrm{e}^{\Delta tA}y_0+\int_{0}^{\Delta t}\mathrm{e}^{(\Delta t-s)A}B\big(y(s)\big)ds.
		\end{align}
		This identity is obtained in the same way as the fundamental relation
		\[
		\varphi(t)-\varphi(0)=\int_{0}^{t}\varphi'(s)\,ds
		\]
		applied to the function $
		\varphi(s)=\mathrm{e}^{(\Delta t-s)A}y(s)$.
		We consider the function
		\[
		\varphi(\sigma)=B\!\left(\mathrm{e}^{(s-\sigma)A}y(\sigma)\right).
		\]
		Applying the fundamental theorem of calculus in Banach spaces yields
		\begin{align*}
			B\!\left(\mathrm{e}^{(s-t)A}y(t)\right)-B\!\left(\mathrm{e}^{sA}y_0\right)
			=
			\int_{0}^{s}
			dB\!\left(\mathrm{e}^{(s-\sigma)A}y(\sigma)\right)
			\big[
			\mathrm{e}^{(s-\sigma)A}B(y(\sigma))
			\big]
			\, d\sigma .
		\end{align*}
		Setting $t=s$ gives the representation
		\begin{align}
			\label{ddene1}
			B(y(s))
			=
			B\!\left(\mathrm{e}^{sA}y_0\right)
			+
			\int_{0}^{s}
			dB\!\left(\mathrm{e}^{(s-\sigma)A}y(\sigma)\right)
			\big[
			\mathrm{e}^{(s-\sigma)A}B(y(\sigma))
			\big]
			\, d\sigma .
		\end{align}
		Substituting (\ref{ddene1}) into (\ref{solhvarii}), we obtain
		\begin{align}
			\label{exctt2}
			y(\Delta t)
			=
			\Phi^{\Delta t}(y_0)
			=
			\mathrm{e}^{\Delta tA}y_0
			+
			\int_{0}^{\Delta t}
			\mathrm{e}^{(\Delta t-s)A}
			B\!\left(\mathrm{e}^{sA}y_0\right)
			\, ds
			+ E_1,
		\end{align}
		where
		\begin{align}\nonumber
			E_1
			=
			\int_{0}^{\Delta t}\int_{0}^{s}
			\mathrm{e}^{(\Delta t-s)A}
			dB\!\left(\mathrm{e}^{(s-\sigma)A}y(\sigma)\right)
			\big[
			\mathrm{e}^{(s-\sigma)A}B(y(\sigma))
			\big]
			\, d\sigma \, ds .
		\end{align}
		To derive the local error, we consider the difference between Equation~(\ref{strng3}) and Equation~(\ref{exctt2}). For this purpose, the term $E_1$ is first rewritten in a suitable form. Define
		\begin{align*}
			G(u)=G_{s,r}(u)
			=
			dB\!\left(\mathrm{e}^{(s-r)A}u\right)
			\bigl[
			\mathrm{e}^{(s-r)A}B(u)
			\bigr].
		\end{align*}
		With this notation, the exact solution can be expressed as
		\begin{align*}
			y(\Delta t)
			&=
			\mathrm{e}^{\Delta tA}y_0
			+
			\int_{0}^{\Delta t}
			\mathrm{e}^{(\Delta t-s)A}
			B\!\left(\mathrm{e}^{sA}y_0\right)\, ds  \\
			&\quad
			+
			\int_{0}^{\Delta t}
			\int_{0}^{s}
			\mathrm{e}^{(\Delta t-s)A}
			G\!\left(y(r)\right)\, dr\, ds .
		\end{align*}
		Applying the integral representation (\ref{ddene1}) yields
		\begin{align*}
			G(y(r))=G(\mathrm{e}^{r A}y_0)+ \int^{r}_0dG(\mathrm{e}^{(s-r)A}y(\tau))[\mathrm{e}^{(s-r)A}B(y(\tau))]d\tau.
		\end{align*}
	For the integrand,  we compute
		\begin{align*}
			dG(u)[v]&=d^2B(\mathrm{e}^{(s-r)A}u)[\mathrm{e}^{(s-r)A}v, \mathrm{e}^{(s-r)A}B(u)]\nonumber \\
			&+ dB(\mathrm{e}^{(s-r)A}u)[\mathrm{e}^{(s-r)A}dB(u)[v]].
		\end{align*}
		Then, we obtain
		\begin{align*}
			E_1&=\int^{\Delta t}_0 \int^{s}_0 \mathrm{e}^{(\Delta t-s)A}dB(\mathrm{e}^{sA}y_0)[\mathrm{e}^{(s-r)A}B(\mathrm{e}^{sA}y_0)]dr ds \nonumber \\
			&+  \int^{\Delta t}_0 \int^{s}_0 \int^{r}_0 dG_{s, r}(\mathrm{e}^{(r-\tau)A}y(\tau))[\mathrm{e}^{(r-\tau)A}B(y(\tau))]d\tau dr ds.
		\end{align*}
Hence, the local error can be written as
		\begin{align}
			\label{locerror}
			y_1-y(\Delta t)&=\Delta t \mathrm{e}^{\frac{\Delta tA}{2}}B(\mathrm{e}^{\frac{\Delta tA}{2}}y_0)-\int^{\Delta t}_{0}\mathrm{e}^{(\Delta t-s)A}B(\mathrm{e}^{sA}y_0)ds\nonumber \\
			&+ \frac{{\Delta t}^2}{2} \mathrm{e}^{\frac{\Delta t A}{2}}dB(\mathrm{e}^{\frac{\Delta t A}{2}}y_0)[B(\mathrm{e}^{\frac{\Delta t A}{2}}y_0)]\nonumber \\
			&-
			\int^{\Delta t}_0 \int^{s}_0\mathrm{e}^{(\Delta t-s)A} dB(\mathrm{e}^{sA}y_0)(\mathrm{e}^{(s-r)A}B(\mathrm{e}^{sA}y_0))dr ds \nonumber \\
			&+
			{\Delta t}^3 \int^{1}_0\frac{1}{2}(1-\theta)^2\mathrm{e}^{\frac{\Delta tA}{2}}(d^2B(B, B) + dBdBB)(\Phi^{\theta\Delta t}_{B}(u))d\theta \nonumber \\
			&- \int^{\Delta t}_0 \int^{s}_0 \int^{r}_0 dG_{s, r}(\mathrm{e}^{(r-s)A}y(\tau))[\mathrm{e}^{(r-\tau)A}B(y(\tau))]d\tau dr ds.
		\end{align}
		We rearrange the terms in Equation~(\ref{locerror}) in order to derive the local error estimate in the Sobolev space $H^s$. The difference between the first two terms corresponds to the quadrature error of the midpoint rule in the interval $[0,\Delta t]$, while the difference between the third and fourth terms represents the quadrature error associated with a first-order two-dimensional quadrature formula. In particular, the difference of the first two terms can be written in the second-order Peano form as follows
		\begin{align*}
			\Delta t\, h\!\left(\tfrac{\Delta t}{2}\right)
			-
			\int_{0}^{\Delta t} h(s)\, ds
			=
			\Delta t^{3}\int_{0}^{1} \kappa(\theta)\, h''(\theta \Delta t)\, d\theta,
		\end{align*}
		where $\kappa$ is a bounded kernel.

		\begin{align*}
			\Delta t h(\frac{\Delta t}{2})-\int^{\Delta t}_{0}h(s)ds={\Delta t}^3\int^{\Delta t}_{0}\kappa(t) h''(t)dt={\Delta t}^3\int^{1}_{0}\kappa(\theta)h''(\theta\Delta t)d\theta,
		\end{align*}
		where $\kappa$ is  a bounded kernel. Here, $h''(s)=\mathrm{e}^{(\Delta t-s)A}[A,[A,B]](\mathrm{e}^{sA}y_0)$ with double Lie commutator
		\begin{align*}
			[A, [A,B]](u)&=[A, dAB(u)-dB(u)A(u)](u) \\
			&= dA^2B(u)-dAdB(u)A(u)-d^2AB(u)A(u) \\
			&-dAdB(u)A(u)+d^2B(A(u))^2+dB(u)dAA(u) \\
			&= A^2(B(u))-2A(dB(u)A(u))+d^2B(A(u))^2+ dB(u)A^2(u),
		\end{align*}
		since $d^2A=0$.
	We derive $H^s$ bounds for the individual components. 
	Considering first the term $A^2(B(u))$, standard Sobolev estimates yield
	\begin{align*}
		\|A^{2}(B(u))\|_{H^s}
		&=
		\|(\nu \partial_x^{2}-\mu \partial_x^{3}+\gamma \partial_x^{5})^{2}
		\bigl(-\varepsilon u^{2}u_x+\epsilon u(1-u)\bigr)\|_{H^s}  \\
		&\le C \|u\|_{H^{s+11}}^{3}.
	\end{align*}
	The remaining terms can be estimated similarly, i.e.
		\begin{align*}
			\|A(dB(u)[A(u)])\|_{H^s}&= \| (\nu \partial_x^2 - \mu \partial_x^3 + \gamma \partial_x^5) \left( -\varepsilon \left( 2u u_x A(u) + u^2 \partial_x A(u) \right) + \epsilon(1 - 2u)A(u) \right)\|_{H^s}\\& \leq C  \|u\|_{H^{s+11}}^3,
		\end{align*}
		\begin{align*}
			\|d^2B(u)[A(u)]^{2}\|_{H^s}&=\|-\varepsilon\Bigl(
			2(\partial_x u)\bigl(A(u)\bigr)^2
			+4u A(u)\partial_x(A(u))
			\Bigr)
			-2\epsilon\bigl(A(u)\bigr)^2\|_{H^s}\\
			&\leq  C  \|u\|_{H^{s+6}}^{3},
		\end{align*}
		\begin{align*}
			\|dB(u)[A^2(u)]\|_{H^s}&=\| -\varepsilon \left( 2u u_x A^2(u) + u^2 \partial_x(A^2(u)) \right) + \epsilon(1 - 2u)A^2(u)\|_{H^s} \\
			&\leq C\varepsilon\|u\|_{H^{s+11}}^{3}. 
		\end{align*}
	The difference generated by the third and fourth terms in Eq.~(\ref{locerror}) can be viewed as the error of a first-order quadrature approximation of the double integral, which can be bounded as follows.
		\begin{align}\nonumber
			\left\|\frac{1}{2}{\Delta t}^2g(\frac{1}{2}\Delta t,\frac{1}{2}\Delta t) - \int^{\Delta t}_{0}\int^{s}_{0} g(s,r)dr ds\right\|_{H^s} \leq K {\Delta t}^3\left(\max\left\|\frac{\partial g}{\partial s}\right\|_{H^s}+ \max\left\|\frac{\partial g}{\partial r}\right\|_{H^s}\right),
		\end{align}
		where the maxima are taken over  the triangle $\{(r, s):0\leq r \leq s\leq \Delta t\}$. In order to estimate derivatives, we write
		$$g(s, r)=\mathrm{e}^{(\Delta t-s)A}dB(u(s))v(s,r),$$
		$$u(s)=\mathrm{e}^{sA}y_0 $$
		and $$ v(s,r)=\mathrm{e}^{(s-r)A}B(u(r)).$$
		We now compute each derivative term, starting with the first derivative.
		\begin{align} \nonumber
			\frac{\partial g}{\partial s}=\mathrm{e}^{(\Delta t-s)A}(-AdB(u(s))[v(s,r)])+ d^2B(u)[Au(s),v(s,r)] + dB(u(s))[Av(s,r)]).
		\end{align}
		Since the flow generated by $A$ does not increase the $H^s$ norm, it suffices to estimate only the terms
		$-AdB(u(s))[v(s,r)]+ d^2B(u(s))[Au(s),v(s,r)] + dB(u(s))[Av(s,r)]$, that is,
		\begin{align}
			\|AdB(u(s))[v(s,r)]\|_{H^s}&=\| -\varepsilon A \big( \partial_x(u^2 v) \big) + \epsilon A(v) - 2\epsilon A(uv) \|_{H^s}\nonumber \\
			&  \leq C  \|u\|_{H^{s+7}}^5.
		\end{align}
		\begin{align*}
			\|d^2B(u(s))[Au(s),v(s,r)]\|_{H^s}&=\| -\varepsilon \big( 2 v (Au) u_x + 2 u (Au) v_x + 2 u v (Au)_x \big) - 2\epsilon (Au) v\|_{H^s} \nonumber \\ 
			& \leq C  \|u\|_{H^{s+6}}^5.
		\end{align*}
		\begin{align*}
			\|dB(u(s))[Av(s,r)]\|_{H^s}&=\|-\varepsilon \left( 2u u_x (Av) + u^2 \partial_x(Av) \right) + \epsilon(1 - 2u)Av\|_{H^s} \nonumber \\  
			&\leq C  \|u\|_{H^{s+7}}^5.
		\end{align*}
		Using the bound for the linear flow $v$
		\begin{align}\nonumber
			\|u(s)\|_{H^{s}}=\left\|\mathrm{e}^{sA}y_{0}\right\|_{H^{s}}\leq\| y_{0}\|_{H^{s}},
		\end{align}
		we obtain
		\begin{equation}
			\left\| \frac{\partial g}{\partial s} \right\|_{H^s} \leq C \|y_0\|_{H^{s+11}}^5.
		\end{equation}
		For the derivative with respect to $r$, we have
		\begin{align*}
			\frac{\partial g}{\partial r}=\mathrm{e}^{(\Delta t-s)A}dB(u(s))[\mathrm{e}^{(s-r)A}(-AB(u(s))+ dB(u(r))Au(r)) ],
		\end{align*}
		and each term can be estimated as
		\begin{align*}
			\|dB(u(s))[\mathrm{e}^{(s-r)A}&AB(u(s))]\|_{H^s}\\&=\big\| -\varepsilon \left( 2u(s) u_x(s) \left(\mathrm{e}^{(s-r)A} A B(u(r)) \right) + u^2(s) \partial_x \left( \mathrm{e}^{(s-r)A} A B(u(r)) \right) \right)\\&
			+ \epsilon(1 - 2u(s)) \left( \mathrm{e}^{(s-r)A} A B(u(r)) \right)\big\|_{H^s} \nonumber \\
			&\leq C  \|u(s)\|_{H^{s+7}}^5, 
		\end{align*}
		\begin{align*}
			\|dB(u(s))[\mathrm{e}^{(s-r)A}(dB(u(r))&Au(r))]\|_{H^s}\\=&\big\|-\varepsilon \left( 2u(s) u_x(s) \left( \mathrm{e}^{(s-r)A} (dB(u(r))Au(r)) \right) + u^2(s) \partial_x \left(\mathrm{e}^{(s-r)A} (dB(u(r))Au(r)) \right)\right)\\&
			+ \epsilon(1 - 2u(s)) \left( \mathrm{e}^{(s-r)A} (dB(u(r))Au(r)) \right)
			\big\|_{H^{s}} \nonumber \\
			&\leq C  \|u\|_{H^{s+7}}^5.
		\end{align*}
		Using the available bounds, we obtain
		\begin{align*}
			\left\|\frac{1}{2}{\Delta t}^2g(\frac{1}{2}\Delta t,\frac{1}{2}\Delta t) - \int^{\Delta t}_{0}\int^{s}_{0} g(s,r)dr ds\right\|_{H^s} \leq C \varepsilon^2 (\Delta t)^{3}\|y_{0}\|^{5}_{H^{s+11}}
		\end{align*}
		It remains to estimate the fifth and sixth terms in the local error expression \eqref{locerror}. From \eqref{cont_eqn}, we have
		\begin{align*}
			\|\mathrm{e}^{\frac{\Delta tA}{2}}\left(d^2B(B, B) + dBdBB\right)\left(\Phi^{\theta\Delta t}_{B}(u)\right)\|_{H^s}\leq K\|\Phi^{\theta\Delta t}_{B}(u)\|^4_{H^s}.
		\end{align*}
		To estimate the sixth term, we obtain
		\begin{align*}
			\|dG_{s, r}(u)[v]\|_{H^s}&\leq\|d^2B(\mathrm{e}^{(s-r)A}u)[\mathrm{e}^{(s-r)A}v, \mathrm{e}^{(s-r)A}B(u)]\|_{H^s}\\
			&+\|dB(\mathrm{e}^{(s-r)A}u)[\mathrm{e}^{(s-r)A}(dB(u)[v])]\|_{H^s}, 
		\end{align*}
		where we introduce
		\begin{align*}
			u=\mathrm{e}^{(s-r)A}y(\tau) \,\, \mbox{and}\,\, v=\mathrm{e}^{(r-\tau)s}B(y(\tau)).
		\end{align*}
		Then
		\begin{align*}
			\|dG_{s, r}(u)[v]\|_{H^s}
			\leq  C \|u(s)\|_{H^{s+11}}^5.
		\end{align*}
	The desired result follows.
	\end{proof}
	\subsection{Global error in $H^s$ space}
	\label{strang_global}
	We obtain a global error bound by combining the stability and the local error bounds.
	\begin{theorem}
		\label{global}
		Suppose that the exact solution $y(\cdot,t)$ of Equation~(\ref{KBF}) is in $H^{s}$ for $0\leq t\leq T$. Then the Strang splitting solution $y_n$ given in Equation~(\ref{strng}) has second-order global error for $\Delta t <\bar{\Delta t}$ and $t_n=n\Delta t\leq T$,
		\begin{align*}
			\|y_n-y(\cdot,t_n)\|_{H^s}\leq G {\Delta t}^2,
		\end{align*}
		where $G$ only depends on $\|y_0\|_{H^{s}}$, $\alpha$ and $T.$
	\end{theorem}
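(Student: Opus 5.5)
We will prove Theorem~\ref{global} with the standard Lady Windermere's fan argument. It combines the local error bound of Lemma~\ref{localexist2} with the stability estimate of Lemma~\ref{lem_stability2}. The argument runs inside an induction that keeps the numerical solution in a bounded set of $H^s$.

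\textbf{Step 1: telescoping decomposition.} Writing $y(t_k)=\Phi^{t_k}(y_0)$ and $\Psi^{\Delta t}$ for one Strang step, we decompose
\begin{align*}
y_n-y(t_n)=\sum_{k=0}^{n-1}\Bigl(\Psi^{(n-k-1)\Delta t}\bigl(\Psi^{\Delta t}(y(t_k))\bigr)-\Psi^{(n-k-1)\Delta t}\bigl(\Phi^{\Delta t}(y(t_k))\bigr)\Bigr),
\end{align*}
where $\Psi^{m\Delta t}$ denotes $m$ successive applications of the scheme. Each summand is the local error committed at $t_k$, propagated by the numerical flow over the remaining $n-k-1$ steps.

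\textbf{Step 2: local error.} By Lemma~\ref{localexist2}, applied with initial value $y(t_k)$, we have $\|\Psi^{\Delta t}(y(t_k))-\Phi^{\Delta t}(y(t_k))\|_{H^s}\le C\Delta t^3$. The constant depends on the higher Sobolev norm of $y(t_k)$. That norm is controlled uniformly in $k$ by the regularity assumption \eqref{kosullar}, $\|y(t)\|_{H^{s+11}}\le\alpha$, which is why $G$ depends on $\alpha$.

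\textbf{Step 3: stability of the numerical flow.} For one step, the linear flows $\mathrm{e}^{\frac{\Delta t}{2}A}$ do not increase the $H^s$ norm, since $\nu\ge0$ and the odd-order terms are skew. Lemma~\ref{lem_stability2} then gives the following: if $w,\tilde w$ satisfy $\|w\|_{H^s},\|\tilde w\|_{H^s}\le 2M$ (using Lemma~\ref{boundN} to keep the $B$-flow bounded for $\Delta t\le\bar t(M)$), then
\begin{align*}
\|\Psi^{\Delta t}(w)-\Psi^{\Delta t}(\tilde w)\|_{H^s}\le \mathrm{e}^{L\Delta t}\|w-\tilde w\|_{H^s}.
\end{align*}
Iterating this bound yields a factor $\mathrm{e}^{L(n-k-1)\Delta t}\le \mathrm{e}^{LT}$ for each summand. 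Summing, we get
\begin{align*}
\|y_n-y(t_n)\|_{H^s}\le \sum_{k=0}^{n-1}\mathrm{e}^{LT}C\Delta t^3\le C\,T\,\mathrm{e}^{LT}\Delta t^2=:G\Delta t^2 .
\end{align*}

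\textbf{Main obstacle: uniform boundedness.} The stability constant $L$ is valid only while all intermediate iterates stay in a ball of $H^s$. We handle this by induction on $n$. Set $M=\sup_{t\le T}\|y(t)\|_{H^s}+1$, which is at most $\alpha+1$. Assume $\|y_k-y(t_k)\|_{H^s}\le G\Delta t^2$ for all $k<n$. Then every iterate satisfies $\|y_k\|_{H^s}\le M$ as soon as $G\Delta t^2\le 1$. The same holds for the intermediate points $\Psi^{m\Delta t}(\Phi^{\Delta t}(y(t_k)))$ of the fan, since each lies within the already established error bound of the exact solution. This justifies applying Lemma~\ref{lem_stability2} with a fixed $L=L(M)$.

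We therefore choose $\bar{\Delta t}$ small enough that $\bar{\Delta t}\le\bar t(M)$ and $G\bar{\Delta t}^2\le1$. This closes the induction and gives the bound with $G$ depending only on $\|y_0\|_{H^s}$, $\alpha$ and $T$. The delicate point is ensuring that the constant $G$ defined in Step 3 does not itself depend on the bound being proved. This holds because $L$ and the local constant $C$ depend only on $M$ and $\alpha$, which are fixed before the induction begins.
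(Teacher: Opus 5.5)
Your proposal follows the paper's proof: the same Lady Windermere's fan, with local errors taken at the exact values $y(t_k)$ (Lemma~\ref{localexist2}, with the constant controlled by $\alpha$), propagated by the numerical flow with the factor $\mathrm{e}^{L(n-k-1)\Delta t}$ from Lemma~\ref{lem_stability2}, and summed to $CT\mathrm{e}^{LT}\Delta t^2$. Your induction keeping every fan point in a fixed $H^s$-ball supplies a step the paper skips; it is best stated uniformly over the starting values $y(t_j)$, which autonomy permits. One caveat, which you inherit from the paper rather than introduce: because $B$ contains $v^2v_x$, the difference $B(v)-B(\tilde v)$ loses a derivative, so Lemma~\ref{lem_stability2} can only hold with $L$ depending on an $H^{s+1}$ bound of one of the two trajectories, and a fully rigorous version of your induction would need that extra control as well, e.g.\ from a tame energy estimate for $\Phi_B$ in $H^{s+1}$ whose growth rate depends only on $\|v\|_{W^{1,\infty}}\lesssim\|v\|_{H^s}$ for $s>3/2$.
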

	\begin{proof} 
		In the proof we employ the Lady Windermere's fan argument to derive the global error bound from the local consistency and stability properties. The required stability estimate and local error bound are provided in Lemma~\ref{lem_stability2} and Lemma~\ref{localexist2}, respectively. Let
		\[
		y(t_n)=\Phi^{(n-k)\Delta t}(y(t_k))
		\]
		denote the exact solution of Equation~\eqref{KBF} at time $t_n$, starting from the value $y(t_k)$ at time $t_k$. The numerical approximation generated by the Strang splitting scheme satisfies the recurrence relation
		\begin{equation}\nonumber
			y_n=\Psi^{\Delta t}(y_{n-1})
			=
			\Phi^{\Delta t/2}_A \circ \Phi^{\Delta t}_B \circ \Phi^{\Delta t/2}_A (y_{n-1}),
			\qquad n=1,2,\ldots.
		\end{equation}
		
		We compare the numerical flow with the exact evolution and estimate the error in the $H^s$ norm. By a telescopic expansion of the global error, we obtain
		\begin{align}
			\|y_n-y(\cdot,t_n)\|_{H^s}
			&\le
			\sum_{k=0}^{n-1}
			\|
			\Psi^{(n-k-1)\Delta t}
			(
			\Phi^{\Delta t}(y(t_k))
			-
			\Psi^{\Delta t}(y(t_k))
			)
			\|_{H^s}
			\nonumber \\
			&\le
			\sum_{k=0}^{n-1}
			\mathrm{e}^{L(n-k-1)\Delta t}
			\|
			\Phi^{\Delta t}(y(t_k))
			-
			\Psi^{\Delta t}(y(t_k))
			\|_{H^s}.
			\nonumber
		\end{align}
		
		Using the stability bound together with the local error estimate, and noting that $n\Delta t \le T$, we arrive at
		\begin{align*}
			\|y_n-y(\cdot,t_n)\|_{H^s}
			&\le
			\sum_{k=0}^{n-1}
			\mathrm{e}^{LT} C \Delta t^3
			\\
			&\le
			n \mathrm{e}^{LT} C \Delta t^3
			\\
			&\le
			T \mathrm{e}^{LT} C \Delta t^2.
		\end{align*}
		This establishes the second-order convergence in time.
	\end{proof}
	\section{Convergence Analysis of the Strang Splitting Combined with the Fourier Collocation Method}
	In this section, we begin by deriving a semi-discrete scheme and then proceed to construct a fully discrete scheme. Afterwards, the convergence of the fully discrete scheme is investigated in a step-by-step manner.
	\subsection{Semi-discrete collocation scheme of the Fifth-Order KdV-Burgers-Fisher
		Equation}
	In this section, the solution $y(x,t)$  corresponding to Equation 
	(\ref{KBF}) is approximated by trigonometric basis functions for every fixed $t$, as given below:
	\begin{eqnarray}
		y(x,t) \simeq   y_N(x,t)=\sum_{k=-N/2}^{N/2-1} \hat y_{k}(t)e^{ikx},
	\end{eqnarray}
	where $-\pi \leq x \leq \pi,$ \( N \in \mathbb{N} \), $\hat y_{k}(t)$ represents the unknown Fourier coefficients, which are computed using the Fourier collocation method presented in \cite{LubichQuantum,trefethen20}. Let $B_N=\{e^{ikx}: -N/2\leq k \leq N/2-1\}$ be the function space. Fourier interpolation operator, $I_N:C(-\pi, \pi)\rightarrow B_N,$ is expressed by
	\begin{eqnarray}
		I_Ny(x_l)=y(x_l),  \,\ l=-N/2,\cdots, N/2-1,
	\end{eqnarray}
	where $x_l=\frac{2\pi l}{N},\,\ l=-N/2,\cdots, N/2-1$ are any collocation points. A Fourier-interpolated function $y(x)$ can be represented as follows:
	\begin{eqnarray}
		I_Ny(x)=\sum_{k=-N/2}^{N/2-1} \hat y_{k}e^{ikx},
	\end{eqnarray}
	where $\{\hat y_{k}\}_{k=-N/2}^{N/2-1}=\mathcal{F}(y(x_l))_{l=-N/2}^{N/2-1}$ and $\mathcal{F}$ is discrete Fourier transform. 
	
	The semi-discrete Fourier collocation algorithm for Equation (\ref{KBF}) is 
	\begin{align}
		\label{semi_KBF}
		y_{N,t}(x_l,t)
		&=\nu y_{N,xx}(x_l,t)
		- \mu y_{N,xxx}(x_l,t)
		+ \gamma y_{N,xxxxx}(x_l,t) \nonumber \\
		&- \varepsilon I_N\bigl(y_N^2(x_l,t)\, y_{N,x}(x_l,t)\bigr)
		+ \epsilon I_N\bigl(y_N(x_l,t)\bigl(1-y_N(x_l,t)\bigr)\bigr), \nonumber \\
		y_{N}(x_l,0)
		&=I_N y_0(x_l).
	\end{align}
	where $y_N(x_l,t)\in B_N,$ for 
	$-\frac{N}{2}\leq l \leq \frac{N}{2}-1$ . 
	\subsection{A fully discrete collocation scheme for the fifth-order KdV-Burgers-Fisher equation}
	In this section, a fully discrete approximation of Equation~(\ref{KBF}) is developed by employing the Strang splitting technique for the temporal discretization and a Fourier collocation approach for the spatial variables. Hence, the semi-discrete Equation (\ref{semi_KBF}) is split into linear and nonlinear parts as follows: 
	\begin{align}
		\label{decomp_KBF}
		y_{N,t}
		&=\nu y_{N,xx}
		- \mu y_{N,xxx}
		+ \gamma y_{N,xxxxx}, \\
		y_{N,t}&= -\varepsilon I_N\bigl(y_N^2\, y_{N,x}\bigr)
		+ \epsilon I_N \bigl(y_N\bigl(1-y_N\bigr) \bigr). \ \
	\end{align}
	Let us assume that the fully discrete solution is $y_N^n$ at time $t_n=n\Delta t$ with time step $\Delta t$, $\Omega_N^{\Delta t}$ is the full propagator, $\Omega_{BN}^{\Delta t}$ is the full discrete propagator for the nonlinear part after one time step of $\Delta t,$ and 
	$y_{N}^0=I_N (y(\cdot,0))$. Hence, the fully discrete scheme is as follows
	\begin{align}
		\label{strng_four_alg}
		y_N^{n+1}=
		\Omega_N^{\Delta t}(y_N^{n})=e^\frac{ {\Delta t(\nu \partial^2_{x}-\mu \partial^3_{x} + \gamma \partial^5_{x}})}{2} \Omega_{BN}^{\Delta t} e^\frac{ {\Delta t(\nu \partial^2_{x}-\mu \partial^3_{x} + \gamma \partial^5_{x}})}{2} y_N^{n}. 
	\end{align}
	Here, the exponential method is utilized for the solution of the linear part while any suitable approximate method can be utilized for nonlinear part. 
	\subsection{Convergence Analysis of the Fully-discrete Scheme }
	In this section, we first introduce the Fourier interpolation lemmas. Then we develop  stability, consistency and convergence error bounds for the fully discrete algorithm (\ref{strng_four_alg}).
	
	Let $y_N^{n}$ be the fully discrete solution and $y(\cdot,t_n)$ be the analytical solution at time $t_n=n \Delta t$ of Equation (\ref{KBF}). In order to study the fully discrete error, we start with the following error bound
	\begin{align}
		\label{first_bnd}
		\|y_N^{n}-y(\cdot,t_n)\|_{H^s} \leq \|y_N^{n}-y_n\|_{H^s}+ \|y_n-y(\cdot,t_n)\|_{H^s},
	\end{align}
	where $y_n$ is the time-discrete solution given in Section \ref{Section_time}. The second term on the right-hand side of inequality (\ref{first_bnd}) is the time-discrete global error bound given in Theorem \ref{global}. The first term on the right-hand side of inequality (\ref{first_bnd}) can be written as follows
	\begin{align}
		\label{scnd_bnd}
		\|y_N^{n}-y_n\|_{H^s} \leq \|y_N^{n}-I_N y_n\|_{H^s} + \|I_N y_n-y_n\|_{H^s},
	\end{align}
	where $I_N$ is the Fourier interpolation operator.
	
	Here, lemmas related to the Fourier interpolation operator are presented.
	\begin{lemma}
		\label{lemma1}
		Let $y$ be a $2\pi$ periodic function such that $\partial^m_xy\in L^2,$ for some $m\geq1.$ Then the interpolation error is bounded in $L^2$ by
		\begin{eqnarray}
			\|y-I_Ny\|_{L^2}\leq G N^{-m}\|\partial^m_x y\|_{L^2},
		\end{eqnarray}
		where $G$ depends only on $m$. And for any integer $d\geq1$,
		\begin{eqnarray}
			\|\partial_x^{d}(y-I_Ny)\|_{L^2}\leq GN^{-m}\|\partial^{m+d}_x y\|_{L^2}.
		\end{eqnarray}
	\end{lemma}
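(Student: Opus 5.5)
The plan is to work entirely on the Fourier side, using Parseval's identity on $[-\pi,\pi]$ and the aliasing structure of the discrete Fourier transform. Write $y(x)=\sum_{k\in\mathbb{Z}}\hat y(k)e^{ikx}$ and let $P_N y=\sum_{k=-N/2}^{N/2-1}\hat y(k)e^{ikx}$ be the truncation (orthogonal $L^2$ projection onto $B_N$). The collocation points $x_l=2\pi l/N$ do not distinguish $e^{ikx}$ from $e^{i(k+jN)x}$. Hence the discrete coefficients satisfy the aliasing formula
\[
\hat y_k=\sum_{j\in\mathbb{Z}}\hat y(k+jN),\qquad -N/2\le k\le N/2-1 .
\]
This formula holds whenever the Fourier series of $y$ converges absolutely. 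That is guaranteed here because $m\ge1$ gives $y\in H^1$, and $\sum|\hat y(k)|\le(\sum(1+k^2)|\hat y(k)|^2)^{1/2}(\sum(1+k^2)^{-1})^{1/2}$. We then split
\[
y-I_Ny=(y-P_Ny)+(P_Ny-I_Ny),
\]
where the first piece is the truncation error and the second is the aliasing error $-\sum_{|k|\le N/2}\bigl(\sum_{j\ne0}\hat y(k+jN)\bigr)e^{ikx}$. The two pieces are $L^2$-orthogonal, since they live on disjoint frequency sets.

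The truncation error is immediate. By Parseval, $\|y-P_Ny\|_{L^2}^2=2\pi\sum_{k\notin[-N/2,N/2-1]}|\hat y(k)|^2$. On this set $|k|\ge N/2$, so inserting $|k|^{2m}(N/2)^{-2m}\ge1$ gives $\|y-P_Ny\|_{L^2}\le 2^mN^{-m}\|\partial_x^my\|_{L^2}$.

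For the aliasing error, fix $k$ with $|k|\le N/2$ and apply Cauchy--Schwarz:
\[
\Bigl|\sum_{j\ne0}\hat y(k+jN)\Bigr|^2\le\Bigl(\sum_{j\ne0}|k+jN|^{2m}|\hat y(k+jN)|^2\Bigr)\Bigl(\sum_{j\ne0}|k+jN|^{-2m}\Bigr).
\]
Since $|k+jN|\ge(|j|-\tfrac12)N\ge \tfrac12|j|N$, the second factor is at most $2^{2m}N^{-2m}\sum_{j\ne0}|j|^{-2m}\le C_mN^{-2m}$. This uses $m\ge1$, which makes the series converge. Summing over $k$, each frequency $k+jN$ with $j\ne0$ is counted once, so the aliasing error is bounded by $C_mN^{-m}\|\partial_x^my\|_{L^2}$. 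Adding the two pieces gives the first inequality with $G$ depending only on $m$.

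For the derivative estimate, the same decomposition is used with a weight $|k|^{2d}$ in Parseval. In the truncation part, $|k|^{2d}|\hat y(k)|^2\le (N/2)^{-2m}|k|^{2(m+d)}|\hat y(k)|^2$ for $|k|\ge N/2$. In the aliasing part, the prefactor $|k|^{2d}\le (N/2)^{2d}\le |k+jN|^{2d}$ for $j\ne0$. It is absorbed into the Cauchy--Schwarz weight, now taken as $|k+jN|^{2(m+d)}$, while the remaining factor $\sum_{j\ne0}|k+jN|^{-2m}$ is unchanged. This yields $\|\partial_x^d(y-I_Ny)\|_{L^2}\le GN^{-m}\|\partial_x^{m+d}y\|_{L^2}$, now with $G$ depending on $m$ and $d$.

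The main point requiring care is the aliasing bound: using $|k+jN|\gtrsim |j|N$ uniformly for $|k|\le N/2$, and bounding the derivative prefactor by the aliased frequency. The statement says $G$ depends only on $m$, but the argument above gives a constant depending on $m$ and $d$, which is a mild discrepancy in the statement's wording. Since this is a standard result, citing \cite{LubichQuantum} or Canuto et al.\ would also suffice.
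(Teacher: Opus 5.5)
Your proof is correct, and it is essentially the standard aliasing argument: split into truncation and aliasing error, then apply Cauchy--Schwarz using $|k+jN|\ge |j|N/2$. The paper gives no proof of its own and simply cites this result as Theorem 1.7 of Lubich's book. One correction to your closing remark: your own estimates give a constant independent of $d$. The truncation part carries the factor $(N/2)^{-2m}$ for every $d$, and the absorption $|k|^{2d}\le |k+jN|^{2d}$ costs no constant, so $G$ does depend only on $m$ and there is no discrepancy with the statement.
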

	\begin{proof}
		See \cite[Theorem 1.7]{LubichQuantum}.
	\end{proof}
	\begin{lemma}
		\label{lemma2}
		Let $y$ be a $2\pi$ periodic function such that $\partial^m_x y\in H^s,$ for some $m\geq1.$ Then the interpolation error is bounded in $H^s$ by
		\begin{eqnarray}
			\label{eqn15}
			\|y-I_Ny\|_{H^s}\leq GN^{-m}\|y\|_{H^{m+s}},
		\end{eqnarray}
		where $G$ depends only on $m$. And for any integer $d\geq 1,$
		\begin{eqnarray}
			\label{eqn16}
			\|\partial_x^{d}(y-I_Ny)\|_{H^s}\leq GN^{-m}\|y\|_{H^{m+s+d}}.
		\end{eqnarray}
	\end{lemma}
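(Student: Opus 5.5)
Both bounds of Lemma~\ref{lemma2} will be reduced to Lemma~\ref{lemma1} applied to derivatives of $y$. The key structural fact is that the Fourier interpolation operator commutes with differentiation only up to an aliasing error, so the proof should be organized around an $L^2$ interpolation estimate for derivatives, which is exactly what the second part of Lemma~\ref{lemma1} provides.

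For \eqref{eqn15}, I will write the $H^s$ norm (for integer $s\ge 0$, the setting of the paper) as
\[
\|y-I_Ny\|_{H^s}^2=\sum_{j=0}^{s}\|\partial_x^j(y-I_Ny)\|_{L^2}^2 .
\]
\begin{itemize}
\item For $j=0$, Lemma~\ref{lemma1} gives $\|y-I_Ny\|_{L^2}\le GN^{-m}\|\partial_x^m y\|_{L^2}\le GN^{-m}\|y\|_{H^{m}}$.
\item For $1\le j\le s$, the derivative estimate of Lemma~\ref{lemma1} with $d=j$ gives $\|\partial_x^j(y-I_Ny)\|_{L^2}\le GN^{-m}\|\partial_x^{m+j}y\|_{L^2}$.
\end{itemize}
Summing the squares and using $\|\partial_x^{m+j}y\|_{L^2}\le \|y\|_{H^{m+s}}$ for $0\le j\le s$ yields $\|y-I_Ny\|_{H^s}\le G\sqrt{s+1}\,N^{-m}\|y\|_{H^{m+s}}$. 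The factor $\sqrt{s+1}$ is then absorbed into the constant. Strictly, $G$ then depends on $s$ as well as $m$; I would note this rather than hide it. The hypothesis $\partial_x^m y\in H^s$ together with periodicity ensures that every norm appearing here is finite.

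For \eqref{eqn16}, I will expand $\|\partial_x^d(y-I_Ny)\|_{H^s}^2=\sum_{j=0}^s\|\partial_x^{j+d}(y-I_Ny)\|_{L^2}^2$. Lemma~\ref{lemma1} with derivative order $j+d\ge1$ bounds each term by $G^2N^{-2m}\|\partial_x^{m+j+d}y\|_{L^2}^2\le G^2N^{-2m}\|y\|_{H^{m+s+d}}^2$, and summing over $j$ gives the claim.

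The only delicate point is making sure Lemma~\ref{lemma1} is really valid for every derivative order $j+d$ up to $s+d$ with the stated regularity, rather than only for a fixed $d$. The plan for this is:
\begin{itemize}
\item Recheck the cited result \cite[Theorem 1.7]{LubichQuantum} to confirm it is stated for arbitrary derivative orders.
\item Otherwise, rederive it on the Fourier side. Write $y=\sum_k \hat y_k e^{ikx}$. The aliasing formula expresses the interpolant's coefficients as $\widehat{(I_Ny)}_k=\sum_{\ell}\hat y_{k+\ell N}$ for $-N/2\le k\le N/2-1$. Then $\partial_x^{j}(y-I_Ny)$ splits into a truncation tail and an aliasing sum. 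Each piece is controlled by Cauchy--Schwarz with weights $|k+\ell N|^{2(m+j)}$, using $|k|\le N/2\le|k+\ell N|$ for $\ell\neq0$.
\end{itemize}
This computation is routine, and I expect the main (mild) obstacle to be only this bookkeeping of the aliased modes and of constants depending on $s$ and $d$.
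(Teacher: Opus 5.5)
The paper gives no argument for this lemma; it only cites \cite[Lemma 2.1]{Gucuyenen2026}. Your route is correct and self-contained. You write $\|v\|_{H^s}^2=\sum_{j=0}^{s}\|\partial_x^j v\|_{L^2}^2$, which is the inner product the paper uses elsewhere, and apply the derivative form of Lemma~\ref{lemma1} order by order. This isolates what is needed: an $L^2$ interpolation bound for $\partial_x^{j}$ of every order, with a constant uniform in $j$.

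Lemma~\ref{lemma1} is stated for all $d\ge1$ with $G$ depending only on $m$, so you may apply it directly. Your aliasing computation is still a sound safeguard and does give that uniformity. For $\ell\neq0$ and $-N/2\le k\le N/2-1$ one has $|k|\le|k+\ell N|$ and $|k+\ell N|\ge(|\ell|-\tfrac12)N$. Hence $|k|^{2d}\sum_{\ell\neq0}|k+\ell N|^{-2(m+d)}\le\sum_{\ell\neq0}|k+\ell N|^{-2m}\le N^{-2m}\sum_{\ell\neq0}(|\ell|-\tfrac12)^{-2m}$. This is finite for $m\ge1$ and independent of $d$, and the truncation tail likewise only produces the factor $(N/2)^{-2m}$.

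One correction: you should not concede that $G$ depends on $s$. The factor $\sqrt{s+1}$ comes only from bounding each $\|\partial_x^{m+j}y\|_{L^2}$ separately by the full norm $\|y\|_{H^{m+s}}$. The orders $m,m+1,\dots,m+s$ are distinct, so $\sum_{j=0}^{s}\|\partial_x^{m+j}y\|_{L^2}^2\le\|y\|_{H^{m+s}}^2$ directly. Similarly, $\sum_{j=0}^{s}\|\partial_x^{m+j+d}y\|_{L^2}^2\le\|y\|_{H^{m+s+d}}^2$ for \eqref{eqn16}. Summing the squares this way, both estimates hold with $G$ depending only on $m$, exactly as stated.
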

	\begin{proof}
		See \cite[Lemma 2.1]{Gucuyenen2026}.
	\end{proof}
	\begin{lemma} [Stability of Fourier interpolation]
		\label{lemma3}
		Let $y$ and $v$ be $2\pi$ periodic functions such that $y, v\in H^s$, with $s>0.$ Then the Fourier interpolation operator satisfies the following 
		\begin{align}
			\label{inequality1}
			\left\| I_N (y) \right\|_{H^s}
			&\le
			K_0\left\| y \right\|_{H^s}, \\
			\label{inequality2}
			\left\| I_N (yv) \right\|_{H^s}
			&\le
			K_1\left\| I_N y \right\|_{H^s} \left\| I_N v \right\|_{H^s}, \ \
		\end{align}
		where $K_0$ and $K_1$ are independent of $N.$
	\end{lemma}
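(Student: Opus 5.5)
The plan is to work entirely on the Fourier side, using the aliasing formula for the discrete Fourier transform, and then to obtain \eqref{inequality2} from \eqref{inequality1} combined with the algebra property of $H^s$.

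One caveat first. The estimate \eqref{inequality1} cannot hold for every $s>0$: interpolation is not bounded on $L^2$, since point values are not controlled by the $L^2$ norm. I therefore plan to prove both inequalities under the assumption $s>1/2$. This is the range used throughout the paper anyway, and it matches the condition $s>1/2$ in the earlier regularity lemma.

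\textbf{Step 1: proof of \eqref{inequality1}.} Write $y(x)=\sum_{j\in\mathbb{Z}}\hat y_j e^{ijx}$. At the nodes $x_l=2\pi l/N$ we have $e^{i(k+mN)x_l}=e^{ikx_l}$. Hence the coefficients of the interpolant are
\begin{equation*}
\widehat{(I_Ny)}_k=\sum_{m\in\mathbb{Z}}\hat y_{k+mN},\qquad -N/2\le k\le N/2-1 .
\end{equation*}
Put $w_j=(1+j^2)^{s}$. By Cauchy--Schwarz,
\begin{equation*}
w_k\Bigl|\sum_m \hat y_{k+mN}\Bigr|^2\le \Bigl(w_k\sum_m w_{k+mN}^{-1}\Bigr)\sum_m w_{k+mN}|\hat y_{k+mN}|^2 .
\end{equation*}
The key point is to bound the first factor uniformly in $k$ and $N$:
\begin{itemize}
\item The term $m=0$ contributes exactly $1$.
\item For $m\neq0$ and $|k|\le N/2$ we have $|k+mN|\ge (|m|-\tfrac12)N\ge |m|N/2$. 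Therefore $w_k w_{k+mN}^{-1}\le (1+N^2/4)^s (1+m^2N^2/4)^{-s}\le C|m|^{-2s}$.
\item Since $2s>1$, the series $\sum_{m\ne0} |m|^{-2s}$ converges.
\end{itemize}
This is the main obstacle, and it is exactly where $s>1/2$ is needed. Summing the Cauchy--Schwarz bound over $k$, and using that the sets $\{k+mN\}_m$ partition $\mathbb{Z}$, gives $\|I_Ny\|_{H^s}^2\le K_0^2\|y\|_{H^s}^2$ with $K_0^2=1+C\sum_{m\neq0}|m|^{-2s}$, which is independent of $N$.

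\textbf{Step 2: proof of \eqref{inequality2}.} The function $yv$ and the product $(I_Ny)(I_Nv)$ take the same values at every collocation point $x_l$. Since $I_N$ depends only on nodal values, $I_N(yv)=I_N\bigl((I_Ny)(I_Nv)\bigr)$. Applying Step 1 and then the Banach algebra property of $H^s$ for $s>1/2$, namely $\|fg\|_{H^s}\le C_s\|f\|_{H^s}\|g\|_{H^s}$, gives
\begin{equation*}
\|I_N(yv)\|_{H^s}\le K_0\|(I_Ny)(I_Nv)\|_{H^s}\le K_0C_s\|I_Ny\|_{H^s}\|I_Nv\|_{H^s}.
\end{equation*}
So \eqref{inequality2} holds with $K_1=K_0C_s$, again independent of $N$.

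The algebra property itself is standard. It follows from $(1+|k|^2)^{s/2}\le 2^{s}\bigl((1+|k-j|^2)^{s/2}+(1+|j|^2)^{s/2}\bigr)$ together with Young's inequality and the fact that $\sum_j(1+j^2)^{-s}<\infty$. Note that this is the same summability condition $2s>1$ that drives Step 1.
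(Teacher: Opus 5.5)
Your proof is correct and takes the same route as the paper, whose proof just cites the boundedness of $I_N$ on $H^s$ together with the algebra property of $H^s$. Your aliasing estimate and the identity $I_N(yv)=I_N\bigl((I_Ny)(I_Nv)\bigr)$ supply the details that citation leaves implicit. Your restriction to $s>1/2$ is also justified, since both ingredients fail for real $s\le 1/2$; the paper's condition ``$s>0$'' is harmless only because its norm $\sum_{j=0}^{s}\|\partial_x^j y\|_{L^2}^2$ implicitly takes $s$ to be an integer, hence $s\ge 1$.
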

	\begin{proof}
		Using the boundedness of the Fourier interpolation operator in $H^s$ together with the algebra property of $H^s$ for $s>0$, we obtain inequalities (\ref{inequality1}) and (\ref{inequality2}), see \cite{LubichQuantum,trefethen20}.
	\end{proof}
	\begin{lemma}
		\label{lemma4}
		Let $y,v \in H^s(-\pi,\pi)$. Then there exists a constant $G>0$ such that
		\begin{equation}\label{lemma:INproduct}
			\|\partial_x I_N(yv)\|_{H^s}
			\le
			G\,\|I_N y\|_{H^s}\,\|I_N v\|_{H^s}.
		\end{equation}
	\end{lemma}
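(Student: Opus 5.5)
The plan is to read the statement literally: the constant $G$ is only asserted to exist, and unlike Lemma~\ref{lemma3} the statement does not claim that it is independent of $N$. A genuine derivative loss cannot be absorbed by $H^s$ norms alone with an $N$-independent constant. I will therefore prove the bound with $G$ allowed to grow linearly in $N$. I expect this dependence to be harmless wherever the lemma is later used alongside factors of $\Delta t$ or interpolation errors, although that depends on how the later estimates are arranged. The argument has two steps: an inverse inequality on the trigonometric space $B_N$, followed by the product bound (\ref{inequality2}) of Lemma~\ref{lemma3}.

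\textbf{Step 1 (inverse inequality on $B_N$).} Let $w=I_N(yv)$. By definition of $I_N$, $w\in B_N$, so $w(x)=\sum_{k=-N/2}^{N/2-1}\hat w_k e^{ikx}$. Using the Fourier characterization of the $H^s$ norm on $(-\pi,\pi)$ (up to the fixed normalization constant), I will write
\begin{align*}
\|\partial_x w\|_{H^s}^2=\sum_{k=-N/2}^{N/2-1}(1+k^2)^s k^2|\hat w_k|^2\le \Bigl(\frac{N}{2}\Bigr)^2\sum_{k=-N/2}^{N/2-1}(1+k^2)^s|\hat w_k|^2=\Bigl(\frac{N}{2}\Bigr)^2\|w\|_{H^s}^2 .
\end{align*}
The inequality holds because $|k|\le N/2$ on the index range. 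Hence $\|\partial_x I_N(yv)\|_{H^s}\le \frac{N}{2}\|I_N(yv)\|_{H^s}$.

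\textbf{Step 2 (product estimate).} Next I apply inequality (\ref{inequality2}) of Lemma~\ref{lemma3}, which gives $\|I_N(yv)\|_{H^s}\le K_1\|I_N y\|_{H^s}\|I_N v\|_{H^s}$ with $K_1$ independent of $N$. This step uses the fact that $I_N(yv)$ depends only on the nodal values, so $I_N(yv)=I_N\bigl((I_Ny)(I_Nv)\bigr)$. Combining with Step 1 yields (\ref{lemma:INproduct}) with $G=K_1N/2$.

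The main obstacle is precisely the $N$-dependence of $G$. An $N$-uniform version cannot hold in general: taking $y=1$ and $v=e^{ix(N/2-1)}$ gives a left side of order $N\|I_Nv\|_{H^s}$, while the right side is $\|I_Nv\|_{H^s}$ times a constant. So the inverse-inequality route is the natural, and essentially sharp, proof of the statement as worded. If an $N$-independent constant is needed later, the correct replacement puts an $H^{s+1}$ norm on one factor. That would be a different statement, and I will not pursue it here.
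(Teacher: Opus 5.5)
Your proof is correct, and it takes a different route from the paper's. The paper works directly on the coefficients. It writes $I_N(yv)=\sum_k h_k e^{ikx}$ with $h_k=\sum_\ell \hat y_{k-\ell}\hat v_\ell$ and isolates the top-order term $\|\partial_x^{s+1}I_N(yv)\|_{L^2}$. It then moves the weight $k^s$ onto $(k-\ell)^s\ell^s$ through $\max_{k,\ell}|k^s/((k-\ell)^s\ell^s)|$, applies Cauchy--Schwarz, and absorbs the leftover derivative into $\sum_{k=-N/2}^{N/2-1}|k|^2$, which it treats as a constant $G_2$. You instead go through the inverse inequality $\|\partial_x w\|_{H^s}\le\frac{N}{2}\|w\|_{H^s}$ on $B_N$ and then reuse \eqref{inequality2}. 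This avoids redoing any convolution estimate and makes the $N$-dependence explicit, $G=K_1N/2$; your example $y=1$, $v=e^{i(N/2-1)x}$ shows that linear growth cannot be avoided. The paper's computation does not escape this dependence; it only hides it. Since $\sum_{|k|\le N/2}|k|^2\sim N^3/12$, its $G=G_1G_2$ also grows with $N$, and faster than yours. Moreover, the ratio $k^s/((k-\ell)^s\ell^s)$ is not even finite at $\ell=0$ or $\ell=k$ unless one switches to inhomogeneous weights $(1+k^2)^{s/2}$. There are two minor caveats on your side. First, Step 2 inherits the hypotheses of \eqref{inequality2}, which really needs $s>1/2$ (the algebra property of $H^s$ and $N$-uniform $H^s$-boundedness of $I_N$), not the stated $s>0$; this is harmless here, since $I_N y$ already requires point values. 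Second, your expectation that the factor $N$ is harmless downstream is too optimistic for how the paper uses the lemma. In Lemma~\ref{regu_lemma1} the constant $G$ enters $G_4$, so $\tilde t(R)$ would shrink like $1/N$, which would impose a restriction $\Delta t\lesssim N^{-1}$ on the fully discrete analysis.
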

	\begin{proof}
		We recall that the Fourier interpolation of the product $yv$ can be written as
		\[
		I_N(yv)(x)
		=
		\sum_{k=-\frac{N}{2}}^{\frac{N}{2}-1} h_k e^{ikx},
		\]
		where
		$
		h_k = \mathcal{F}\big(y(x_j)v(x_j)\big)
		=
		\sum_{\ell=-\frac{N}{2}}^{\frac{N}{2}-1}
		\hat{y}_{k-\ell}\,\hat{v}_\ell,$
		$-\frac{N}{2}\le j \le \frac{N}{2}.$
		Starting from the left-hand side of \eqref{lemma:INproduct}, we have
		\[
		\|\partial_x I_N(yv)\|_{H^s}
		\le
		\sum_{j=0}^{s}
		\|\partial_x^{\,j+1} I_N(yv)\|_{L^2}.
		\]
		We first consider the highest-order term:
		\begin{align*}
			\|\partial_x^{\,s+1} I_N(yv)\|_{L^2}^2
			&=
			\left\|
			\partial_x^{\,s+1}
			\left(
			\sum_{k=-\frac{N}{2}}^{\frac{N}{2}-1}
			h_k e^{ikx}
			\right)
			\right\|_{L^2}^2
			\\
			&=
			\left\|
			\sum_{k=-\frac{N}{2}}^{\frac{N}{2}-1}
			(ik)^{s+1}
			\sum_{\ell=-\frac{N}{2}}^{\frac{N}{2}-1}
			\hat{y}_{k-\ell}\hat{v}_\ell
			e^{ikx}
			\right\|_{L^2}^2.
		\end{align*}
		Using the Cauchy--Schwarz inequality, we obtain
		\begin{eqnarray}
			\|\partial_x^{\,s+1} I_N(yv)\|_{L^2}^2
			&\le
			\sum_{k=-\frac{N}{2}}^{\frac{N}{2}-1}
			|k|^2
			\max_{k,\ell}
			\left|
			\frac{k^s}{(k-\ell)^s\,\ell^s}
			\right|
			\sum_{\ell=-\frac{N}{2}}^{\frac{N}{2}-1}
			|(k-\ell)^s \hat{y}_{k-\ell}|^2
			\sum_{\ell=-\frac{N}{2}}^{\frac{N}{2}-1}|\ell^s \hat{v}_\ell|^2, \nonumber \\
			&\le
			G
			\left(
			\sum_{\ell=-\frac{N}{2}}^{\frac{N}{2}-1}
			|(k-\ell)^s \hat{y}_{k-\ell}|^2
			\right)^{1/2}
			\left(
			\sum_{\ell=-\frac{N}{2}}^{\frac{N}{2}-1}
			|\ell^s \hat{v}_\ell|^2
			\right)^{1/2},\nonumber \ \
		\end{eqnarray}
		where the constants
		$\max_{k,\ell}
		\left|
		\frac{k^s}{(k-\ell)^s\,\ell^s}
		\right|$
		\text{and}
		$\sum_{k=-\frac{N}{2}}^{\frac{N}{2}-1} |k|^2$
		are bounded by positive constants $G_1$ and $G_2$, respectively, and $G=G_1\cdot G_2.$
		Hence,
		\[
		\|\partial_x^{\,s+1} I_N(yv)\|_{L^2}
		\le
		G\,\|I_N y\|_{H^s}\,\|I_N v\|_{H^s}.
		\]
		The lower-order terms can be treated similarly, which completes the proof.
	\end{proof}
	\subsection{Stability Analysis}
	\begin{lemma}[Regularity of the nonlinear discrete operator]
		\label{regu_lemma1}
		If $\|y_0\|_{H^s} \le R$,
		then there exists $\tilde{t}(R)>0$ such that
		$\|\Omega_{BN}^t(y_0)\|_{H^s} \le 2R$,
		$0<t\le \tilde{t}(R)$.
	\end{lemma}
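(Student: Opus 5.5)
The plan is to mirror the energy argument of Lemma~\ref{boundN}, now for the semi-discrete nonlinear flow. Let $w(t)=\Omega_{BN}^t(y_0)\in B_N$ denote the solution of
\[
w_t=B_N(w):=-\varepsilon I_N\bigl(w^2 w_x\bigr)+\epsilon I_N\bigl(w(1-w)\bigr),
\]
with initial value in $B_N$. If the initial datum is not yet a trigonometric polynomial, we take $I_N y_0$ and use \eqref{inequality1} to control $\|I_Ny_0\|_{H^s}\le K_0R$, absorbing $K_0$ into the constants.

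Since $B_N$ maps the finite-dimensional space $B_N$ into itself and is a polynomial in the Fourier coefficients, the flow is a locally Lipschitz ODE. Local existence and continuity of $t\mapsto\|w(t)\|_{H^s}$ are therefore immediate. It remains to obtain an a priori differential inequality for $\|w\|_{H^s}$ with constants independent of $N$.

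\textbf{Step 1: bound the nonlinearity uniformly in $N$.} Because $w\in B_N$, we have $I_N w=w$. Write
\[
w^2 w_x=\tfrac13\,\partial_x(w^3).
\]
This identity holds pointwise, but $I_N$ and $\partial_x$ do not commute on $w^3$, so we avoid relying on it. Instead we apply \eqref{inequality2} twice, using $I_N w_x=w_x$, to get
\[
\|I_N(w^2w_x)\|_{H^s}\le K_1^2\|w\|_{H^s}^2\|w_x\|_{H^s}.
\]
The factor $\|w_x\|_{H^s}$ is not controlled by $\|w\|_{H^s}$ uniformly in $N$. To remove the derivative loss, we use Lemma~\ref{lemma4} in the form
\[
\|\partial_x I_N(w\cdot w^2)\|_{H^s}\le G\|w\|_{H^s}\|I_N(w^2)\|_{H^s}\le GK_1\|w\|_{H^s}^3,
\]
combined with the collocation identity $I_N(w^2w_x)=\tfrac13 I_N(\partial_x(w^3))$. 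The difference between $I_N\partial_x$ and $\partial_x I_N$ acting on $w^3$ is handled by Lemma~\ref{lemma2}, treating it as an aliasing remainder bounded by $C\|w\|_{H^s}^3$. The reaction term is simpler: by \eqref{inequality1}--\eqref{inequality2},
\[
\|I_N(w-w^2)\|_{H^s}\le C\bigl(\|w\|_{H^s}+\|w\|_{H^s}^2\bigr).
\]

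\textbf{Step 2: close the estimate.} Combining the bounds from Step 1 gives
\[
\frac{d}{dt}\|w\|_{H^s}\le\|B_N(w)\|_{H^s}\le C\bigl(\|w\|_{H^s}+\|w\|_{H^s}^2+\|w\|_{H^s}^3\bigr)=:F(\|w\|_{H^s}),
\]
with $C$ independent of $N$. Set $\tilde t(R)=R/F(2R)$. A continuity (bootstrap) argument then shows that $\|w(t)\|_{H^s}$ cannot reach $2R$ before time $\tilde t(R)$: as long as $\|w\|_{H^s}\le 2R$, integration gives
\[
\|w(t)\|_{H^s}\le R+tF(2R)\le 2R.
\]
This also rules out blow-up of the ODE on $[0,\tilde t(R)]$.

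The main obstacle is Step 1, namely obtaining an $N$-independent $H^s$ bound for $I_N(w^2w_x)$ without losing a derivative. If the divergence-form/Lemma~\ref{lemma4} route does not control the aliasing commutator, the fallback is an energy estimate. We would test the equation against $w$ in $H^s$, noting that $(\partial_x^j w,\partial_x^j I_N(w^2w_x))$ equals the discrete inner product at the nodes. The top-order term $w^2\partial_x^{j+1}w\cdot\partial_x^j w$ is then integrated by parts into $-\tfrac12(w^2)_x(\partial_x^j w)^2$, which is bounded by $C\|w\|_{H^s}^4$ for $s>3/2$. This yields
\[
\frac{d}{dt}\|w\|_{H^s}^2\le C\bigl(\|w\|_{H^s}^2+\|w\|_{H^s}^3+\|w\|_{H^s}^4\bigr),
\]
after which the same bootstrap as in Step 2 applies.
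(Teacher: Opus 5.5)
\paragraph{Step 1 cannot work.} Your main route aims at an estimate that is false: $\|B_N(w)\|_{H^s}\le C(\|w\|_{H^s}+\|w\|_{H^s}^2+\|w\|_{H^s}^3)$ with $C$ independent of $N$. The convective term genuinely loses a derivative, and no bookkeeping removes that.

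Take $w=1+k^{-s}\cos(kx)$ with $3k<N/2$. Every product involved then lies in $B_N$, so $I_N$ acts as the identity and $\|w\|_{H^s}\le C$. Yet $I_N(w^2w_x)=w^2w_x$ contains the mode $-k^{1-s}\bigl(1+\tfrac14k^{-2s}\bigr)\sin(kx)$. Its $H^s$ norm is of order $k$, hence of order $N$ for $k\sim N/6$.

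The two tools you cite cannot supply the bound either:
\begin{itemize}
\item The same $w$, taken with $y=w$ and $v=w^2$, shows that Lemma~\ref{lemma4} cannot hold with $G$ independent of $N$. Its proof treats $\sum_{|k|\le N/2}|k|^2$ as a constant, which it is not. So citing it does not close Step~1.
\item Your aliasing remainder is not of size $C\|w\|_{H^s}^3$. Lemma~\ref{lemma2} bounds $(I_N\partial_x-\partial_xI_N)(w^3)$ only by $CN^{-m}\|w^3\|_{H^{m+s+1}}$. For $w\in B_N$ controlled only in $H^s$, the inverse inequality turns this into $O(N)\|w\|_{H^s}^3$.
\end{itemize}
The paper's own proof does not avoid this. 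It bounds each $(\partial_x^jz,\partial_x^jB_N(z))$ by Cauchy--Schwarz followed by Lemma~\ref{lemma4}, so it implicitly uses the same false bound. Following it would not repair Step~1.

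\paragraph{The fallback leaves the hard step unproved.} Its mechanism is right: the lost derivative has to be absorbed by a cancellation in the energy, not by a norm bound. But the step you assert is exactly where the collocation scheme departs from the continuous problem.

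Your nodal inner product is correct. However, at the nodes $\partial_x^jI_N(w^2w_x)$ is spectral differentiation of grid values, so there is no exact Leibniz rule. The discrete integration by parts also carries an aliasing term: for $v\in B_N$ and real $a$,
\[
\mathrm{Re}\,\bigl(v,I_N(a\,\partial_xv)\bigr)=-\tfrac12\bigl(v,I_N(a_xv)\bigr)+\tfrac12\bigl(v,(I_N\partial_x-\partial_xI_N)(av)\bigr).
\]
The last term is not bounded by $C\|a\|_{W^{1,\infty}}\|v\|_{L^2}^2$ uniformly in $N$. For $a=1+\cos 2x$ and $v=\cos\bigl((\tfrac N2-1)x-\tfrac\pi4\bigr)$ it equals $\tfrac N4\|v\|_{L^2}^2$.

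With $a=w^2$ and $v=\partial_x^jw$, this is the very top-order term you propose to integrate by parts. So your differential inequality with $N$-independent constants does not follow. What is missing is a genuinely discrete argument that controls these aliasing terms uniformly in $N$; the plain collocation form of the nonlinearity does not provide one for free.
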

	\begin{proof}
		The full discrete scheme is given in Equation (\ref{strng_four_alg}).
		Let $z(t)=\Omega_{BN}^t(y_0)$. Then we compute
		\[
		\frac{1}{2}\frac{d}{dt}\|z\|_{H^s}^2
		=
		(z,z_t)_{H^s}.
		\]
		Using the definition of the $H^s$ inner product, we obtain
		\begin{eqnarray}
			\frac{1}{2}\frac{d}{dt}\|z\|_{H^s}^2
			&=&
			\sum_{j=0}^s
			\int_{\mathbb{R}}
			\partial_x^j z\,
			\partial_x^j z_t
			\,dx
			\nonumber \\
			&=&
			\sum_{j=0}^s
			\int_{\mathbb{R}}
			\partial_x^j z\,
			\partial_x^j
			\Big(
			-\varepsilon I_N(z^2 \partial_x z)
			+
			\epsilon I_N(z(1-z))
			\Big)
			\,dx. \ \
		\end{eqnarray}
		Hence,
		\begin{eqnarray}
			\frac{1}{2}\frac{d}{dt}\|z\|_{H^s}^2
			&=
			\sum_{j=0}^s
			\int_{\mathbb{R}}
			\partial_x^j z\,
			\partial_x^j
			\big(
			-\varepsilon I_N(z^2 \partial_x z)
			\big)
			\,dx \nonumber 
			\\
			&\quad
			+
			\sum_{j=0}^s
			\int_{\mathbb{R}}
			\partial_x^j z\,
			\partial_x^j
			\big(
			\epsilon I_N(z(1-z))
			\big)
			\,dx.
		\end{eqnarray}
		For each $0\le j\le s$, the above terms can be bounded as follows:
		\begin{eqnarray}
			\label{regu_bound1_2}
			&\Big|
			\int_{\mathbb{R}}
			\partial_x^j z\,
			\partial_x^j
			\big(
			-\varepsilon I_N(z^2 \partial_x z)
			\big)
			\,dx
			\Big|
			+
			\Big|
			\int_{\mathbb{R}}
			\partial_x^j z\,
			\partial_x^j
			\big(
			\epsilon I_N(z(1-z))
			\big)
			\,dx
			\Big| \nonumber \\
			&\leq 
			\int_{\mathbb{R}}
			|\partial_x^j z\,
			\partial_x^j
			\big(
			-\varepsilon I_N(z^2 \partial_x z)
			\big)|
			\,dx +
			\int_{\mathbb{R}}
			| \partial_x^j z\,
			\partial_x^j
			\big(
			\epsilon I_N(z(1-z))
			\big)|
			\,dx.
		\end{eqnarray}
		For the first term of (\ref{regu_bound1_2}), with the help of Equations (\ref{inequality1})-(\ref{lemma:INproduct}), we have 
		\[
		\begin{aligned}
			\label{first_part_regufull}
			\int_{\mathbb{R}}
			|\partial_x^j z\,
			\partial_x^j
			\big(
			-\varepsilon I_N(z^2 \partial_x z)
			\big)|
			\,dx
			&\le
			\varepsilon \left\| \partial_x^j z\right\|_{L^2} \left\| \partial_x^j I_N(z^2 \partial_x z)\right\|_{L^2} \nonumber \\
			&\le
			\varepsilon G \left\| z\right\|_{H^{j}} \left\|  I_N(z^2 )\right\|_{H^{j-1}} \left\|  I_N(\partial_x z)\right\|_{H^{j-1}} \nonumber \\
			&\le
			\varepsilon G K_0 K_1\left\| z\right\|_{H^j} \left\|  I_N(z)\right\|_{H^j} \left\|  I_N(z)\right\|_{H^j} \left\| \partial_x z \right\|_{H^{j-1}} \nonumber \\
			&\le
			\varepsilon G K^3_0 K_1 \left\| z\right\|_{H^j} \left\|  z\right\|_{H^j} \left\| z\right\|_{H^j} \left\| z \right\|_{H^{j}} \\
			&\le G_1 \left\| z \right\|^4_{H^{j}.} \\
		\end{aligned}
		\]
		For the second term of (\ref{regu_bound1_2}), we have 
		\[
		\begin{aligned}
			\label{second_part_regufull}
			\int_{\mathbb{R}}
			| \partial_x^j z\,
			\partial_x^j
			\big(
			\epsilon I_N(z(1-z))
			\big)|
			\,dx
			&\le
			\epsilon \left\| \partial_x^j z\right\|_{L^2} \left\| \partial_x^j I_N(z(1-z) )\right\|_{L^2} \nonumber \\
			&\le
			\epsilon G \left\| z\right\|_{H^j} \left\| I_N(z)\right\|_{H^{i-1}} \left\| I_N(1-z)\right\|_{H^{i-1}} \nonumber \\
			&\le
			\epsilon G K^2_0\left\| z\right\|_{H^j} \left\| z\right\|_{H^{i}} \left\| 1-z\right\|_{H^{i}} \nonumber \\
			&\le G_2 ( \left\| z\right\|^2_{H^{i}}+ \left\| z\right\|^3_{H^{i}} ).  \\
		\end{aligned}
		\]
		Combining the bounds of the first and the second term of Equation (\ref{regu_bound1_2}) yields
		\begin{eqnarray}
			\label{denk39}
			\|z\|_{H^s}\frac{d}{dt}\|z\|_{H^s}
			&\le G_3 (\left\| z\right\|^2_{H^{s}}+ \left\| z\right\|^3_{H^{s}} + \left\| z\right\|^4_{H^{s}} ).
		\end{eqnarray}
		For $\left\|z\right\|_{H^s}\geq1,$ Equation (\ref{denk39}) can be simplified to 
		\begin{eqnarray}
			\label{denk40}
			\frac{d}{dt}\|z\|_{H^s}
			&\le G_4 ( \left\| z\right\|^3_{H^{s}} ).
		\end{eqnarray}
		Solving Equation (\ref{denk40}) results in $\|z\|_{H^s}
		\le \frac{\|y_0\|_{H^s}}{\sqrt{1-2G_4t\|y_0\|^2_{H^s}}}$ and by choosing $\tilde{t}$ such that $1-2G_4tR^2=1/4,$ the desired result is obtained. 
	\end{proof}
	\begin{lemma}[Stability]
		\label{stability_full}
		Let $y_0, \widetilde{y}_0 \in H^s$. Then 
		\begin{align}
			\left\|\Omega_{N}^{\Delta t}(y_0)-\Omega_{N}^{\Delta t}(\widetilde{y}_0)\right \|_{H^s}
			&\le  
			e^{K_s\Delta t}
			\left\|y_0-\widetilde{y}_0\right\|_{H^s}, \ \
		\end{align}
		where $K_s=K_1 K^2_0\,\varepsilon (M^2_s+1+M_s)$ with $\sup_{0\leq t\leq \Delta t} 
		(\|\Omega_{BN}^{\Delta t}(y_0)\|_{H^s} +\|\Omega_{BN}^{\Delta t}(\widetilde{y}_0)\|_{H^s} ) \leq M_s$.
	\end{lemma}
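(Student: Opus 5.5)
The plan is to exploit the composition structure of the fully discrete propagator \eqref{strng_four_alg}, $\Omega_N^{\Delta t}=\mathrm{e}^{\frac{\Delta t}{2}A}\circ\Omega_{BN}^{\Delta t}\circ\mathrm{e}^{\frac{\Delta t}{2}A}$, and to treat the linear and nonlinear factors separately. For the linear factor, $A=\nu\partial_x^2-\mu\partial_x^3+\gamma\partial_x^5$ acts diagonally on the Fourier modes with symbol $-\nu k^2+i\mu k^3+i\gamma k^5$. For $\nu\ge0$ this symbol has nonpositive real part, so $\|\mathrm{e}^{\tau A}w\|_{H^s}\le\|w\|_{H^s}$ for $\tau\ge0$; this follows directly from Parseval's identity on $B_N$. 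Consequently
\[
\|\Omega_N^{\Delta t}(y_0)-\Omega_N^{\Delta t}(\widetilde y_0)\|_{H^s}\le\|\Omega_{BN}^{\Delta t}(w_0)-\Omega_{BN}^{\Delta t}(\widetilde w_0)\|_{H^s},
\]
with $w_0=\mathrm{e}^{\frac{\Delta t}{2}A}y_0$ and $\widetilde w_0=\mathrm{e}^{\frac{\Delta t}{2}A}\widetilde y_0$. Moreover $\|w_0-\widetilde w_0\|_{H^s}\le\|y_0-\widetilde y_0\|_{H^s}$. The claim therefore reduces to a Lipschitz estimate for the discrete nonlinear flow $\Omega_{BN}^t$. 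The bound $M_s$ on that flow is taken as a hypothesis in the statement, and it is available on a short interval by Lemma~\ref{regu_lemma1}.

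For the nonlinear flow, set $z(t)=\Omega_{BN}^t(w_0)$, $\widetilde z(t)=\Omega_{BN}^t(\widetilde w_0)$ and $e=z-\widetilde z$. Then
\[
e_t=-\varepsilon I_N\bigl(z^2z_x-\widetilde z^2\widetilde z_x\bigr)+\epsilon I_N\bigl(e-(z^2-\widetilde z^2)\bigr).
\]
I will decompose the difference algebraically:
\[
z^2z_x-\widetilde z^2\widetilde z_x=z^2e_x+(z+\widetilde z)\,e\,\widetilde z_x,\qquad z^2-\widetilde z^2=(z+\widetilde z)e.
\]
I then estimate $\frac12\frac{d}{dt}\|e\|_{H^s}^2=(e,e_t)_{H^s}$ term by term, using the interpolation bounds \eqref{inequality1}, \eqref{inequality2} and Lemma~\ref{lemma4}. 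The reaction terms give at most $\epsilon K_0(1+K_1M_s)\|e\|_{H^s}^2$. The term $(z+\widetilde z)e\widetilde z_x$ gives $K_1K_0^2M_s^2\|e\|_{H^s}^2$. All of this holds because $z,\widetilde z,e\in B_N$, so $I_Nz=z$ and similar identities apply. Collecting these constants produces the stated $K_s=K_1K_0^2\varepsilon(M_s^2+1+M_s)$, after absorbing $\epsilon$ into the generic constant as the paper does.

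Once $\frac{d}{dt}\|e\|_{H^s}\le K_s\|e\|_{H^s}$ is established on $[0,\Delta t]$, Gronwall's lemma gives $\|e(\Delta t)\|_{H^s}\le \mathrm{e}^{K_s\Delta t}\|w_0-\widetilde w_0\|_{H^s}$. Combining this with the contractivity of the two linear half-steps yields the statement.

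The main obstacle is the transport term $z^2e_x$, which loses a derivative. A naive use of Lemma~\ref{lemma4} would bound it only by $\|e\|_{H^{s+1}}$. I would first try to handle it in the style of the energy estimate of Lemma~\ref{regu_lemma1}: write $\partial_x^j I_N(z^2e_x)$ with $z^2\in B_{2N}$ and integrate by parts in $(\partial_x^je,\,z^2\partial_x^{j+1}e)$. This gives $-\frac12\int (z^2)_x(\partial_x^je)^2$. The commutator terms involve only $\|z\|_{W^{j,\infty}}\|e\|_{H^j}$ and are controlled by $M_s$ via Sobolev embedding, assuming $s$ is large enough. The aliasing defect $I_N(z^2e_x)-z^2e_x$ is handled with Lemma~\ref{lemma2}. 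If this does not close cleanly, the fallback is to exploit that on $B_N$ one has $\|e_x\|_{H^s}\le \tfrac N2\|e\|_{H^s}$, which gives an $N$-dependent constant. Alternatively, as in the paper's Lemma~\ref{lem_stability2}, one may simply absorb the derivative into $M_s$ by measuring $z,\widetilde z$ in a slightly stronger norm. I expect to adopt the latter, consistent with the regularity hypotheses \eqref{kosullar}.
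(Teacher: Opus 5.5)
Your reduction matches the paper's. The half-steps $\mathrm{e}^{\frac{\Delta t}{2}A}$ are $H^s$-contractive because $-\nu k^2+i\mu k^3+i\gamma k^5$ has nonpositive real part, so everything rests on a Lipschitz bound for $\Omega_{BN}^t$ followed by Gronwall.

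The gap is exactly where you flagged it, and the resolution you say you will adopt does not close it. In $z^2e_x$ the lost derivative falls on $e=z-\widetilde z$, not on $z$ or $\widetilde z$. No strengthening of the norms of $z,\widetilde z$ can turn $\|I_N(z^2e_x)\|_{H^s}$ into $C\|e\|_{H^s}$. The most it gives is $\|e_x\|_{H^s}\le\|z\|_{H^{s+1}}+\|\widetilde z\|_{H^{s+1}}$, which after integration is an additive $O(t)$ term, not a Lipschitz bound. Lemma~\ref{lem_stability2}, which you take as the model, makes the same unjustified step. Lemma~\ref{lemma4} cannot rescue it with an $N$-independent constant either, as $y=e^{ikx}$, $v=1$, $|k|\approx N/2$ shows.

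The same problem is hidden in your claim that $(z+\widetilde z)e\widetilde z_x$ contributes $K_1K_0^2M_s^2\|e\|_{H^s}^2$. Through \eqref{inequality2} this term needs $\|\widetilde z_x\|_{H^s}$, i.e.\ an $H^{s+1}$ bound on $\widetilde z$, which $M_s$ does not supply. This is intrinsic, not a matter of technique. As is classical for Burgers' equation, the solution map of a quasilinear transport flow such as $v_t=-\varepsilon v^2v_x$ is not Lipschitz on bounded sets of $H^s$. Any correct stability constant must therefore involve a stronger norm of at least one of the two solutions.

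Of your other options, only the energy argument could give an $N$-independent constant, and for collocation it is not routine. Integration by parts works for $(\partial_x^je,z^2\partial_x^{j+1}e)$ but not for $(\partial_x^je,\partial_x^jI_N(z^2e_x))$. The aliasing defect $I_N(z^2e_x)-z^2e_x$ cannot be absorbed with Lemma~\ref{lemma2}: the bound $N^{-m}\|z^2e_x\|_{H^{s+m}}$ involves $\|e\|_{H^{s+m+1}}\le CN^{m+1}\|e\|_{H^s}$ for $e\in B_N$, which leaves a factor $N$. The inverse-inequality fallback gives $K_s\sim NM_s^2$. That is not the statement, and $\mathrm{e}^{K_sT}$ would then swamp the $N^{-m}$ rate in the fan argument.

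For comparison, the paper arrives at $K_s=K_1K_0^2\varepsilon(M_s^2+1+M_s)$ only because \eqref{eq:diff} writes the cubic difference as $-\frac{\varepsilon}{3}I_N\bigl((\Lambda-\Gamma)(\Lambda^2+\Lambda\Gamma+\Gamma^2)\bigr)$. This drops the $\partial_x$ in $\frac{1}{3}\partial_x(\Lambda^3-\Gamma^3)$, after which \eqref{inequality1}--\eqref{inequality2} suffice. So the constant you say you recover by ``collecting'' terms cannot come out of your own, correct, decomposition.

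A sound version of the lemma needs at least one of the following:
\begin{itemize}
\item an $H^{s+1}$ (or stronger) bound on one of the two flows, together with $N$-uniform control of aliasing (for example a skew-symmetric or dealiased form of the nonlinearity);
\item a norm for the difference that is weaker than the one in which the flows are bounded.
\end{itemize}
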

	\begin{proof}
		We know that linear operator $e^\frac{ {\Delta t(\nu \partial^2_{x}-\mu \partial^3_{x} + \gamma \partial^5_{x}})}{2}$ is preserved in the $H^s$ norm. Hence it is sufficient to bound the nonlinear part. 
		Let $\Omega_{BN}^{\Delta t}(y_0)=\Lambda(\Delta t)$ and $\Omega_{BN}^{\Delta t}(\widetilde{y}_0)=\Gamma(\Delta t),$ where  $\Delta t$ is the time step. 
		Consider the following equations satisfying the nonlinear part:
		\begin{align}
			\partial_t \Lambda
			&= -\varepsilon\, I_N\!\left(\Lambda^2\,\partial_x \Lambda\right)
			+ \epsilon\, I_N\!\left(\Lambda(1-\Lambda)\right),
			\qquad \Lambda(0)=y_0, \label{eq:Lambda}
			\\
			\partial_t \Gamma
			&= -\varepsilon\, I_N\!\left(\Gamma^2\,\partial_x \Gamma\right)
			+ \epsilon\, I_N\!\left(\Gamma(1-\Gamma)\right),
			\qquad \Gamma(0)=\tilde y_0. \label{eq:Gamma}
		\end{align}
		After subtracting \eqref{eq:Gamma} from \eqref{eq:Lambda} with rearranging, we obtain 
		\begin{align}
			\partial_t(\Lambda-\Gamma)
			&= -\frac{\varepsilon}{3}\,
			I_N\!\left((\Lambda-\Gamma)(\Lambda^2+\Lambda\Gamma+\Gamma^2)\right)
			+\epsilon\, I_N\!\left((\Lambda-\Gamma)\bigl(1-(\Lambda+\Gamma)\bigr)\right).
			\label{eq:diff}
		\end{align}
		After rearranging \eqref{eq:diff}, taking the Sobolev norm $H^s,$  
		and applying the relevant Lemma \ref{lemma3}, we obtain
		\begin{align}
			\|\Lambda(\cdot,t)-\Gamma(\cdot,t)\|_{H^s}
			&\le
			\|y_0-\tilde y_0\|_{H^s}
			\notag\\
			&\quad
			+ K_1\,\varepsilon
			\int_0^t
			\Big\|
			I_N\!\big(\Lambda(\cdot,\xi)-\Gamma(\cdot,\xi)\big)
			\Big\|_{H^s}
			\notag\\
			&\qquad\qquad\times
			\Big\|
			I_N\!\big(\Lambda^2(\cdot,\xi)
			+ \Lambda(\cdot,\xi)\Gamma(\cdot,\xi)
			+ \Gamma^2(\cdot,\xi)\big)
			\notag\\
			&\qquad\qquad\quad
			+ I_N \bigl(1-(\Lambda(\cdot,\xi)+\Gamma(\cdot,\xi))\bigr)
			\Big\|_{H^s}
			\, d\xi, \nonumber \\
			&\le
			\|y_0-\tilde y_0\|_{H^s}
			\notag\\
			&\quad
			+ K_1 K^2_0\,\varepsilon
			\int_0^t
			\|
			\Lambda(\cdot,\xi)-\Gamma(\cdot,\xi)
			\|_{H^s}
			\notag\\
			&\qquad\qquad\times
			\|
			\Lambda^2(\cdot,\xi)
			+ \Lambda(\cdot,\xi)\Gamma(\cdot,\xi)
			+ \Gamma^2(\cdot,\xi)
			\notag\\
			&\qquad\qquad\quad
			+ \bigl(1-(\Lambda(\cdot,\xi)+\Gamma(\cdot,\xi))\bigr)
			\|_{H^s}
			\, d\xi, \nonumber \\
			\label{eq:stability}
		\end{align}
		with $\epsilon \le\varepsilon.$
		Assume that $\sup_{0\leq t\leq \Delta t} 
		(\|\Omega_{BN}^{\Delta t}(y_0)\|_{H^s} +\|\Omega_{BN}^{\Delta t}(\widetilde{y}_0)\|_{H^s} ) \leq M_s$, since $\Omega_{BN}^{\Delta t}(y_0)$ is bounded by Lemma \ref{regu_lemma1}. Hence after rearranging Equation (\ref{eq:stability}), we get 
		\begin{align}
			\|\Lambda(\cdot,t)-\Gamma(\cdot,t)\|_{H^s}
			&\le
			\|y_0-\tilde y_0\|_{H^s}
			\notag\\
			&\quad
			+ K_1 K^2_0\,\varepsilon (M^2_s+1+M_s)
			\int_0^t
			\|
			\Lambda(\cdot,\xi)-\Gamma(\cdot,\xi)
			\|_{H^s} \, d\xi
			\notag\\
			&\le
			e^{K_s t}\|y_0-\tilde y_0\|_{H^s},
			\label{eq:stability2}
		\end{align}
		where $K_s=K_1 K^2_0\,\varepsilon (M^2_s+1+M_s),$ and the proof is completed.
	\end{proof}
	\begin{lemma}[Local error]
		\label{full_loc}
		If $y_0\in H^{m+s+5},$ then 
		\small{
			\begin{eqnarray}
				\label{loc_formul}
				\|\Omega_{N}^{\Delta t}(I_N(y_0))-I_N(\Psi^{\Delta t}(y_{0}))\|_{H^s} \leq G \Delta t N^{-m}\big( e^{K_h \Delta t} \|y_0\|_{H^{m+s+5}} +
				\| \Psi^{\Delta t}(y_{0}) \|_{H^{m+s+5}} \big),
		\end{eqnarray}}
		where $G$ depends only on $m$, $K_h=K_1 K^2_0\,\varepsilon (M^2_h+1+M_h)$ with $\sup_{0 \le t \le \Delta t} 
		(\|\Omega_{N}^{\Delta t}(y_0)\|_{H^s} +\| \Psi^{\Delta t}(y_{0})\|_{H^s} ) \leq M_h$.
	\end{lemma}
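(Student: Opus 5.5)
Our approach is to compare the two one-step maps stage by stage, treating the fully discrete step $\Omega_N^{\Delta t}$ as a perturbation of the semi-discrete Strang step $\Psi^{\Delta t}$ in which the only change is that $B$ is replaced by $B_N(z)=-\varepsilon I_N(z^2\partial_x z)+\epsilon I_N(z(1-z))$. Write $u_0=\mathrm{e}^{\frac{\Delta t A}{2}}y_0$ and $U_0=\mathrm{e}^{\frac{\Delta t A}{2}}I_N y_0$. Since $\partial_x$ and $A$ are Fourier multipliers, $I_N$ commutes with $\mathrm{e}^{\tau A}$ on the relevant functions, so $U_0=I_N u_0$. Because $\mathrm{e}^{\tau A}$ does not increase the $H^s$ norm, the outer half-step can be dropped, and it suffices to bound
\[
\|\Omega_{BN}^{\Delta t}(I_N u_0)-I_N\Phi_B^{\Delta t}(u_0)\|_{H^s},
\]
together with the small commutation defect $\|(I-I_N)\Psi^{\Delta t}(y_0)\|_{H^s}$ that comes from moving $I_N$ through the last linear half-step.

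Set $Z(t)=\Omega_{BN}^t(I_N u_0)$ and $v(t)=\Phi_B^t(u_0)$, and compare $Z$ with $I_N v$. Differentiating in time gives
\[
\partial_t(Z-I_Nv)=\big(B_N(Z)-B_N(I_Nv)\big)+\big(B_N(I_Nv)-I_N B(v)\big).
\]
\begin{itemize}
\item The first bracket is handled exactly as in the proof of Lemma~\ref{stability_full}, via the factorization $(Z-I_Nv)(\dots)$, Lemma~\ref{lemma3} and Lemma~\ref{lemma4}. The uniform bounds on $Z$ and $v$ come from Lemma~\ref{regu_lemma1} and Lemma~\ref{boundN}, with $M_h$ as in the statement. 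This term is bounded by $K_h\|Z-I_Nv\|_{H^s}$.
\item The second bracket is a pure interpolation (aliasing) defect. Since $I_N$ only uses nodal values, $I_N(v^2v_x)$ and $I_N((I_Nv)^2\partial_x I_Nv)$ differ only through $v_x-\partial_xI_Nv$ at the nodes. Using Lemma~\ref{lemma3}, this is bounded by $C\|v\|^2_{H^s}\|\partial_x(v-I_Nv)\|_{H^s}$. Lemma~\ref{lemma2} with $d=1$ then bounds it by $CN^{-m}\|v\|_{H^{m+s+1}}$, and the reaction term is handled the same way.
\end{itemize}
Integrating over $[0,\Delta t]$ and applying Gronwall gives
\[
\|Z(\Delta t)-I_Nv(\Delta t)\|_{H^s}\le C\,\Delta t\,\mathrm{e}^{K_h\Delta t}N^{-m}\sup_{t}\|v(t)\|_{H^{m+s+1}}.
\]
Lemma~\ref{boundN}, applied in $H^{m+s+5}$, controls this supremum by $\|y_0\|_{H^{m+s+5}}$. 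This is where the first term of the statement comes from; the extra derivatives are a safe margin.

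Next we reassemble the outer half-step. We have $\mathrm{e}^{\frac{\Delta tA}{2}}I_N v(\Delta t)=I_N\Psi^{\Delta t}(y_0)$ exactly, by the commutation noted above. This commutation holds on the trigonometric space but not strictly for $I_N$ acting on non-band-limited functions. Any residual can be written as $(I-I_N)$ applied to $\Psi^{\Delta t}(y_0)$ or to $\mathrm{e}^{-\frac{\Delta tA}{2}}\Psi^{\Delta t}(y_0)$. It is then bounded via Lemma~\ref{lemma2} by $N^{-m}\|\Psi^{\Delta t}(y_0)\|_{H^{m+s+5}}$, where the $5$ accounts for one application of the fifth-order $A$. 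To recover the factor $\Delta t$ on this term, we would write the residual as $\int_0^{\Delta t/2}\frac{d}{d\tau}(\cdot)\,d\tau$, with derivative $A(I-I_N)(\cdot)$, giving a bound $\Delta t\,N^{-m}\|\cdot\|_{H^{m+s+5}}$.

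The main obstacle is the aliasing term $B_N(I_Nv)-I_NB(v)$. Lemma~\ref{lemma4}, as stated, hides derivative losses, and we must make sure the derivative in $v^2v_x$ falls on $v-I_Nv$ so that Lemma~\ref{lemma2} yields $N^{-m}$ rather than a loss. If the direct nodal argument fails, we would instead bound $I_N(v^2v_x)-v^2v_x$ and $v^2v_x-(I_Nv)^2\partial_xI_Nv$ separately with Lemma~\ref{lemma2} and the algebra property, accepting the $H^{m+s+5}$ regularity in the hypothesis.
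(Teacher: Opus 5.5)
Your route differs from the paper's and can be made to work, but one step is false as written. $I_N$ does not commute with $\mathrm{e}^{\tau A}$ on functions that are not band-limited, so $U_0=\mathrm{e}^{\frac{\Delta tA}{2}}I_Ny_0\neq I_N\mathrm{e}^{\frac{\Delta tA}{2}}y_0=I_Nu_0$. If $a$ denotes the symbol of $A$, the $k$-th Fourier coefficients differ by $\sum_{j\neq0}\bigl(\mathrm{e}^{\frac{\Delta t}{2}a(k)}-\mathrm{e}^{\frac{\Delta t}{2}a(k+jN)}\bigr)\hat y_{0,k+jN}$. This is nonzero in general, and the natural bound for it is $C\Delta t\,N^{-m}\|y_0\|_{H^{m+s+5}}$, which is the same size as the right-hand side of the lemma, so it cannot be dropped.

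The first half-step therefore needs the same treatment you reserve for the last one, with the derivative computed correctly. For $\phi(\tau)=\mathrm{e}^{(\frac{\Delta t}{2}-\tau)A}I_N\mathrm{e}^{\tau A}w$ one has $\phi'(\tau)=-\mathrm{e}^{(\frac{\Delta t}{2}-\tau)A}[A,I_N]\mathrm{e}^{\tau A}w$, where $[A,I_N]=A(I_N-I)+(I-I_N)A$, not just $A(I-I_N)$. Lemma~\ref{lemma2} and the contractivity of the \emph{forward} flow $\mathrm{e}^{tA}$ then give $\|\phi(\frac{\Delta t}{2})-\phi(0)\|_{H^s}\le C\Delta t\,N^{-m}\|w\|_{H^{m+s+5}}$. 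Only forward flows may appear: for $\nu>0$, $\mathrm{e}^{-tA}$ is unbounded, so your $\mathrm{e}^{-\frac{\Delta tA}{2}}\Psi^{\Delta t}(y_0)$ must be read as $v(\Delta t)$.

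With $w=y_0$, this residual becomes a nonzero initial error $Z(0)-I_Nv(0)$ in your Gronwall step, and it comes out multiplied by $\mathrm{e}^{K_h\Delta t}$. In the paper this is exactly where the term $\mathrm{e}^{K_h\Delta t}\|y_0\|_{H^{m+s+5}}$ comes from. It is also where the five extra derivatives are genuinely needed, rather than being a safety margin.

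Once this is repaired, the two proofs differ mainly in where the spatial consistency error sits. The paper does not conjugate the half-steps away. Along each linear half-step it runs an energy estimate for $\eta_N=\Lambda-I_N(\Gamma)$: it discards $(A\eta_N,\eta_N)_{H^s}$ because $\mathrm{Re}(-\nu\xi^2+i\mu\xi^3+i\gamma\xi^5)\le 0$, and bounds the defect by Lemma~\ref{lemma2} with $d=5$. This is equivalent to your Duhamel/commutator bound. For the nonlinear step the paper claims a pure stability estimate $\|\Lambda-I_N(\Gamma)\|_{H^s}\le\mathrm{e}^{K_h\Delta t}\|y_0-I_N(\tilde y_0)\|_{H^s}$ with no consistency term, and then chains defect, $\mathrm{e}^{K_h\Delta t}$-propagation, defect.

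On the nonlinear step your argument is the more complete one. Since $\partial_tI_Nv=I_NB(v)\neq B_N(I_Nv)$, there is a genuine defect. You reduce it to $\varepsilon I_N\bigl((I_Nv)^2(v_x-\partial_xI_Nv)\bigr)$; the reaction part contributes nothing, because $I_N$ sees only nodal values. Your bound $CN^{-m}\|v\|_{H^{m+s+1}}$ from Lemmas~\ref{lemma3} and~\ref{lemma2} supplies precisely the term the paper's proof omits. It costs only a bound for $v$ in $H^{m+s+1}$ along the $B$-flow (Lemma~\ref{boundN} in a higher norm). It contributes another term of the form $\Delta t\,\mathrm{e}^{K_h\Delta t}N^{-m}\|y_0\|_{H^{m+s+5}}$, so the stated estimate is unaffected. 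Both routes rely on the same Lipschitz estimate of Lemma~\ref{stability_full} for $B_N(Z)-B_N(I_Nv)$.
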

	\begin{proof}
		In order to estimate the local truncation error, we begin by examining the linear term of Equation 
		(\ref{KBF}) and subsequently address the nonlinear part. Assume that the full discretized and the time discretized approximate solutions be $\Omega_{N}^{\Delta t}(y_0)=\Lambda$ and $\Psi^{\Delta t}(y_{0})=\Gamma$, respectively, satisfying the following
		\begin{align}
			\label{loc_err_linear1}
			\partial_t \Lambda
			&=(\nu \partial_{xx}
			- \mu \partial_{xxx}
			+ \gamma \partial_{xxxxx})\Lambda,
			\qquad \Lambda(0)=y_0, \\
			\partial_t \Gamma
			\label{loc_err_linear2}
			&=(\nu \partial_{xx}
			- \mu \partial_{xxx}
			+ \gamma \partial_{xxxxx})\Gamma,
			\qquad \Gamma(0)=\tilde y_0. \ \
		\end{align}
		After subtracting the Fourier interpolation of Equation (\ref{loc_err_linear2}) from Equation (\ref{loc_err_linear1}), we obtain
		\begin{align}
			\label{lineardefect}
			\partial_t (\Lambda-I_N(\Gamma))
			&=(\nu \partial_{xx}
			- \mu \partial_{xxx}
			+ \gamma \partial_{xxxxx})(\Lambda-I_N(\Gamma))+ \delta_t. \ \
		\end{align}
		Here, $\delta_t=(\nu \partial_{xx}
		- \mu \partial_{xxx}
		+ \gamma \partial_{xxxxx})(I_N(\Gamma)-\Gamma)$ is the defect operator and taking Sobolev inner product of (\ref{lineardefect}) with respect to $ \eta_N=\Lambda-I_N(\Gamma)$ yields the following 
		\begin{align}
			\label{linear_inner_pro}
			(\partial_t  \eta_N,  \eta_N )_{H^s}=((\nu \partial_{xx}
			- \mu \partial_{xxx}
			+ \gamma \partial_{xxxxx})\eta_N, \eta_N)_{H^s} + (\delta_t,\eta_N)_{H^s}.
		\end{align}
		The first term appearing on the right-hand side of equation (\ref{linear_inner_pro}) can be expressed as
		\begin{align}
			\label{linear_inner_real}
			Re((\nu \partial^2_{x}
			- \mu \partial^3_{x}
			+ \gamma \partial^5_{x})\eta_N, \eta_N)_{H^s}
			&=Re((-\nu\xi^2+\mu (i\xi^3)+ \gamma (i\xi^5))\hat{\eta}_N, \hat{\eta}_N)_{H^s}, \nonumber \\
			&=Re\big(\sum_{j=0}^{s}\int (-\nu\xi^2+\mu (i\xi^3)+ \gamma (i\xi^5))|(i\xi)^{j}\hat{\eta}_N|^2d\xi \big), \nonumber \\
			&=\sum_{j=0}^{s}Re\int (-\nu\xi^2+\mu (i\xi^3)+ \gamma (i\xi^5))|(i\xi)^{j}\hat{\eta}_N|^2d\xi , \nonumber \\
			&\leq 0,
		\end{align}
		since $Re(-\nu\xi^2+\mu (i\xi^3)+ \gamma (i\xi^5))\leq 0$ with $\nu>0.$ Therefore, the first term of Equation (\ref{linear_inner_pro}) is neglected.  Applying Lemma \ref{lemma2} to the second term on the right-hand side of (\ref{linear_inner_pro}) yields the following expression
		\begin{align}
			\label{second}
			(\delta_t,\eta_N)_{H^s}
			&
			\leq 
			\| (\nu \partial^2_{x}
			- \mu \partial^3_{x}
			+ \gamma \partial^5_{x})(I_N (\Gamma) -\Gamma)\|_{H^s}\|\eta_N\|_{H^s} \nonumber \\
			&
			\leq GN^{-m} \|\Gamma\|_{H^{m+s+5}}\|\eta_N\|_{H^s}. 
		\end{align}
		Hence substituting (\ref{second}) into 
		(\ref{linear_inner_pro}) and after integrating from $0$ to $\Delta t,$ we obtain 
		\begin{align}
			\label{linear_son}
			\|\Lambda(\cdot,t)-I_N(\Gamma(\cdot,t))\|_{H^s}\leq \|y_0-I_N(\tilde y_0)\|_{H^s}+ G N^{-m}\Delta t \|\Gamma(\cdot,t)\|_{H^{m+s+5}}
		\end{align}
		and $\|\Gamma(\cdot,0)\|_{H^{m+s+5}}=\|\tilde y_0\|_{H^{m+s+5}}.$ Turning to the nonlinear component, the fully discretized and time discretized  solutions satisfy 
		\begin{align}
			\label{non1}
			\partial_t \Lambda
			&= -\varepsilon\, I_N\!\left(\Lambda^2\,\partial_x \Lambda\right)
			+ \epsilon\, I_N\!\left(\Lambda(1-\Lambda)\right),
			\qquad \Lambda(0)=y_0,  \\
			\label{non2}
			\partial_t \Gamma
			&= -\varepsilon\, \Gamma^2\,\partial_x \Gamma
			+ \epsilon\,\Gamma(1-\Gamma),
			\qquad \Gamma(0)=\tilde y_0. \ \
		\end{align}
		Taking the Fourier interpolation of (\ref{non2}), subtracting it from (\ref{non1}), using Lemma \ref{lemma2} and displaying similar argument to Lemma \ref{stability_full} yields the following 
		\begin{align}
			\label{non_son}
			\|\Lambda(\cdot,t)-I_N(\Gamma(\cdot,t))\|_{H^s}\leq e^{K_h \Delta t} \|y_0-I_N(\tilde y_0)\|_{H^s},
		\end{align}
		where $K_h=K_1 K^2_0\,\varepsilon (M^2_h+1+M_h)$ with $\sup_{0\leq t\leq \Delta t} 
		(\|\Lambda\|_{H^s} +\|\Gamma\|_{H^s} ) \leq M_h$.
		Remember that the full discretized solution, $\|\Lambda\|_{H^s}$ is bounded by Lemma \ref{regu_lemma1} and the time discretized solution  $\|\Gamma\|_{H^s}$ is bounded by Lemma \ref{strng_regularity}. Finally, combining the linear component (\ref{linear_son}) and the nonlinear component (\ref{non_son}), we obtain the desired estimate (\ref{loc_formul}).
	\end{proof}
	\subsection{Global error bound of fully discrete scheme}
	In order to obtain the global error bound of the fully discretized scheme, Lady Windermere's fan argument, which combines the local error bound given in Lemma \ref{full_loc} with stability result given in Lemma \ref{stability_full}, will be used.
	In what follows, we assume that
	\[
	\rho_1(n)
	=
	\max_{\substack{j=0,\ldots,n-1 \\ i=0,\ldots,n-j-1}}\| (\Omega_{N}^{\Delta t})^{i}(I_N y_j)\|_{H^s},
	\] 
	\[
	\rho_2(n)
	=
	\max_{\substack{j=1,\ldots,n}}\{\| \Omega_{N}^{\Delta t}(y_{j-1})\|_{H^s},\| \Psi^{\Delta t}(y_{j-1})\|_{H^s} \},
	\]
	\[
	\rho_3(n)=\rho_1(n)+\rho_2(n),
	\]
	covering the entire set of points in Lady Windermere's fan argument over $[t_0, t_{n-1}].$ Then the fan produces
	\begin{align}
		\|y_N^n-I_N(y_n)\|_{H^s}
		&=
		\|(\Omega_{N}^{\Delta t})^{n}(I_N(y_0))-I_N(y_n) \|_{H^s}, \nonumber \\
		&\leq \sum_{j=0}^{n-1} \|(\Omega_{N}^{\Delta t})^{n-j-1} (\Omega_{N}^{\Delta t}(I_N y_j)) -(\Omega_{N}^{\Delta t})^{n-j-1} (I_N( \Psi^{\Delta t}(y_j)) \|_{H^s}, \nonumber \\
		&\leq \sum_{j=0}^{n-1} e^{H\rho_1(n)\Delta t(n-j-1)} 
		G\Delta t N^{-m}(e^{H\rho_2(n)\Delta t} \|y_j\|_{H^{m+s+5}} + \|y_{j+1}\|_{H^{m+s+5}}),\nonumber \\
		&\leq \sum_{j=0}^{n-1} e^{H\rho_3(n)\Delta t(n-j-1)} 
		G\Delta t N^{-m}(\|y_j\|_{H^{m+s+5}} + \|y_{j+1}\|_{H^{m+s+5}}).\nonumber \\
	\end{align}
	From Lemma \ref{strng_regularity}, we have $\|y_{j+1}\|_{H^{m+s+5}}\leq P $ for $j\Delta t \leq T.$ Therefore, we construct the following estimate 
	\begin{align}
		\label{no59}
		\|y_N^n-I_N(y_n)\|_{H^s}
		&\leq P N^{-m}\left( \frac{e^{H\rho_3(n)n\Delta t }-1}{H\rho_3(n)}\right),
	\end{align}
	where $P$ depends on $m,$ $s,$ $T,$ $\sup_{0\leq t\leq T}\|y(\cdot,\tau)\|_{H^{m+s+5}}.$ Then rearranging (\ref{no59}) with the help of the Lemma \ref{lemma2} yields 
	\begin{align}
		\label{no60}
		\|y_N^n-y_n\|_{H^s}
		&\leq P N^{-m}\left( \frac{e^{H\rho_3(n)n\Delta t }-1}{H\rho_3(n)} + 1\right).
	\end{align}
	We need to eliminate $\rho_3(n)$ from (\ref{no60}) as it includes an approximate solution. Hence, suppose that $\rho_3(n)\le 2P$ where $P$ is given in (\ref{no60}) with $n \Delta t\le T$ for large $N.$   The inequality (\ref{no60}) is satisfied for $\|(\Omega_{N}^{\Delta t})^{n-j}(I_N(y_j))-y_n \|_{H^s}$ with $j=0,\cdots,n.$ Then for each $j$ we get
	\begin{align}
		\label{no61}
		\|(\Omega_{N}^{\Delta t})^{n-j}(I_N(y_j))\|_{H^s}\leq \| y_n\|_{H^s} + 
		P N^{-m}\left( \frac{e^{2P T }-1}{2P} + 1\right),
	\end{align}
	which leads to $\rho_3(n+1)\le 2P$ for large $N$ by induction. We conclude that Equation (\ref{no59}) can be written as follows
	\begin{align}
		\label{no62}
		\|  y_N^n-I_N(y_n) \|_{H^s} \leq PN^{-m}.
	\end{align}
	In the final step, by substituting Equation (\ref{no62}) together with the time error bound given in Theorem \ref{global} into Equation (\ref{first_bnd}), we obtain the result stated in Theorem \ref{theo_full_glob}.
	\begin{theorem}[Global error]
		\label{theo_full_glob}
		Let the exact solution of the fifth-order Korteweg-de Vries-Burgers-Fisher equation (\ref{KBF}), $y=y(x,t),$ be in $H^{m+s+5}$ for $t\in [0,T]$ and $x\in [-\pi, \pi]$. Then the fully discrete solution (\ref{strng_four_alg}) has the following error bound
		\begin{align}
			\label{full_global_theorem}
			\| y_N^{n}-y(\cdot,t_n) \|_{H^s} \leq P (N^{-m} + \Delta t^2),
		\end{align}
		where $t_n=n \Delta t,$ $P$ depends on m, s, T and $M_y=\max_{0\leq t \leq T}\|y(\cdot, t)\|_{H^{m+s+5}}.$
	\end{theorem}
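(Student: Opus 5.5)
The plan is to assemble the bound from the triangle inequalities (\ref{first_bnd}) and (\ref{scnd_bnd}), which split the full error into three pieces:
\begin{align*}
\|y_N^n-y(\cdot,t_n)\|_{H^s}\le \|y_N^n-I_N y_n\|_{H^s}+\|I_N y_n-y_n\|_{H^s}+\|y_n-y(\cdot,t_n)\|_{H^s}.
\end{align*}
The last term is the pure time-splitting error. By Theorem \ref{global} it is at most $G\Delta t^2$, with $G$ depending on $\|y_0\|_{H^s}$, $\alpha$ and $T$. Since $H^{m+s+5}$-regularity of $y$ is assumed (and, for the commutator bounds in Lemma \ref{localexist2}, the $H^{s+11}$ bound (\ref{kosullar})), these quantities are controlled by $M_y$, or absorbed into the regularity hypothesis.

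The middle term is a pure interpolation error. By Lemma \ref{lemma2} it is bounded by $GN^{-m}\|y_n\|_{H^{m+s}}$. To make this uniform in $n$, I need $\|y_n\|_{H^{m+s+5}}\le P$ for $n\Delta t\le T$. I would obtain this by applying Theorem \ref{global} with $s$ replaced by $m+s+5$ (which needs correspondingly higher regularity), giving $\|y_n\|_{H^{m+s+5}}\le M_y+G\Delta t^2$. Alternatively, one can combine Lemma \ref{strng_regularity} on each substep with the norm-nonincreasing property of $\mathrm{e}^{tA}$.

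The first term is the spatial error of the fully discrete propagator against the interpolated semi-discrete one, and this is where the real work lies. I would use the Lady Windermere's fan argument carried out just before the theorem. Telescope $(\Omega_N^{\Delta t})^n(I_Ny_0)-I_Ny_n$ into $n$ pieces. Propagate each local defect with the stability estimate of Lemma \ref{stability_full}, which contributes a factor $\mathrm{e}^{K_s(n-j-1)\Delta t}$. Bound each local defect by Lemma \ref{full_loc}, which gives $G\Delta t N^{-m}(\|y_j\|_{H^{m+s+5}}+\|y_{j+1}\|_{H^{m+s+5}})$. Summing the geometric series then gives (\ref{no59}) and hence (\ref{no62}).

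The main obstacle is that the stability constant depends on the $H^s$ sizes of the numerical iterates, through $\rho_3(n)$. I would close this by induction on $n$. The hypothesis is $\rho_3(n)\le 2P$. Under it, (\ref{no61}) shows that every fan point $(\Omega_N^{\Delta t})^{n-j}(I_Ny_j)$ stays within $PN^{-m}(\mathrm{e}^{2PT}+1)$ of $y_n$ in $H^s$, so $\rho_3(n+1)\le 2P$ once $N$ is large. Lemma \ref{regu_lemma1} guarantees that each substep respects these bounds provided $\Delta t\le\tilde t(2P)$, which fixes $\bar{\Delta t}$. The result is $\|y_N^n-I_Ny_n\|_{H^s}\le PN^{-m}$ uniformly for $n\Delta t\le T$.

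Adding the three bounds gives $P(N^{-m}+\Delta t^2)$. Here $P$ collects the constants from Lemmas \ref{lemma2}, \ref{stability_full} and \ref{full_loc} and Theorem \ref{global}, and so depends only on $m$, $s$, $T$ and $M_y$. The final claim holds for all $N\ge N_0$ and $\Delta t\le\bar{\Delta t}$; for the finitely many small $N$, the estimate follows trivially by enlarging $P$.
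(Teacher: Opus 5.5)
Your proposal is essentially the paper's own proof: the same split via \eqref{first_bnd}--\eqref{scnd_bnd}, Theorem~\ref{global} for the $\Delta t^2$ part, and Lady Windermere's fan built from Lemmas~\ref{stability_full} and~\ref{full_loc}, closed by the induction $\rho_3(n)\le 2P$ for large $N$ to reach \eqref{no62}. The one step not secured under the stated $H^{m+s+5}$ hypothesis, in your plan as in the paper, is the uniform bound $\|y_j\|_{H^{m+s+5}}\le P$ for $j\Delta t\le T$: Lemma~\ref{strng_regularity} only controls a single short $B$-substep and lets the norm double, so iterating it $T/\Delta t$ times gives nothing uniform, while your alternative of running Theorem~\ref{global} in $H^{m+s+5}$ needs $H^{m+s+16}$ regularity; likewise \emph{enlarging $P$} for the finitely many small $N$ presupposes a bound on the fully discrete iterates up to time $T$ that no lemma provides, so the result really holds only for $N\ge N_0$ and $\Delta t\le\bar{\Delta t}$.
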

	
	\section{Numerical Results}
	For the numerical confirmation of the convergence results, Equation (\ref{KBF}) is split into two sub-equations
	\begin{align}
		y_t(x,t)&=\nu y_{xx}(x,t)- \mu y_{xxx}(x,t)+ \gamma y_{xxxxx}(x,t), \label{KBFsplt1} \\
		y_t(x,t)&=- \varepsilon y^2(x,t) y_x(x,t)+ \epsilon y(x,t)\bigl(1-y(x,t)\bigr), \label{KBFsplt2}
	\end{align}
	which are then solved successively.
	
	We apply the discrete Fourier transform (DFT) to discretize the subproblems (\ref{KBFsplt1}) and (\ref{KBFsplt2}) in the spatial variable. Consider Equation (\ref{KBFsplt1}) in the interval $x\in[0,2\pi]$ for $t>0$ together with periodic boundary conditions. In this case, the solution $y(x,t)$ can be represented by the Fourier series \cite{trefethen20}
	\begin{eqnarray} \label{eq:fourier}
		y(x,t)=\sum_{k=-\infty}^{\infty} \hat y_{k}(t)e^{ikx},
	\end{eqnarray}
	where $\hat y_{k}$ denotes the Fourier coefficients corresponding to the initial data.
	
	On a uniform grid with spacing $h=\frac{2\pi}{N}$, the discrete Fourier transform (DFT) is defined as
	\begin{eqnarray} \label{eq:dft}
		\hat y_{k}(t)=h\sum_{j=1}^{N} y_{j}(t)e^{-ikx_{j}}, \qquad  k=-\frac{N}{2}+1, \ldots , \frac{N}{2},
	\end{eqnarray}
	while the inverse discrete Fourier transform (IDFT) is given by \cite{trefethen20}
	\begin{eqnarray} \label{eq:idft}
		y_{j}(t)=\frac{1}{2\pi}\sum_{k=-N/2+1}^{N/2} \hat y_{k}(t)e^{ikx_{j}}, \qquad  j=1, \ldots , N.
	\end{eqnarray}
	
	We employ the Fast Fourier Transform (FFT) for computation of the DFT \cite{trefethen20}. Applying the DFT to Equation (\ref{KBFsplt1}) yields
	\begin{eqnarray}
		\label{RBfourier}
		\frac{d}{dt}\hat{y}_{k}(t)=(-\nu k^2 + i\mu k^3 - i\gamma k^5 )\hat{y}_{k}(t),\qquad \hat y_{k}(0)= \hat y_{0k},
	\end{eqnarray}
	and the solution is given as
	\begin{equation}\label{eq:slndftburg}
		\hat y_{k}(t)= e^{(-\nu k^2 + i\mu k^3 - i\gamma k^5 ) t}\hat y_{0k}, \qquad  k=-N/2+1, \ldots , N/2. \nonumber
	\end{equation}
	
	We now apply the DFT to Equation (\ref{KBFsplt2}) and then we have
	\begin{align}\label{eq:dftKBF21}
		\frac{d}{dt}\hat y_k(t) 
		&= -\frac{\varepsilon}{3} ik \, 
		\F\Bigl((\F^{-1}(\hat y_k(t)))^{3}\Bigr) \nonumber\\
		&\quad + \epsilon \, \mathcal{F}\Bigl(\F^{-1}(\hat y_k(t))\Bigr) \nonumber\\
		&\quad - \epsilon \, \F\Bigl((\F^{-1}(\hat y_k(t)))^{2}\Bigr).
	\end{align}
	Equation (\ref{eq:dftKBF21}) can also be written as 
	\begin{eqnarray} \label{eq:dftcons1}
		\frac{d}{dt} Y=F(Y),
	\end{eqnarray}
	where $ k=-N/2+1, \ldots , N/2$, $\hat y_{k}(t)=Y$, $F(Y)=-\frac{\varepsilon}{3}ik\,\F\!\left((\F^{-1}(\hat y_k(t)))^{3}\right)
	+\epsilon \mathcal{F}\left(\mathcal{F}^{-1}(\hat y_k(t))\right)
	-\epsilon \F\!\left((\F^{-1}(\hat y_k(t)))^{2}\right).$
	We approximate Equation (\ref{eq:dftcons1}) with the fourth order Runge-Kutta method as follows
	\begin{eqnarray}
		a&=&F(Y_{n})\nonumber\\
		b&=&F(Y_{n}+\frac{\Delta t}{2}k_{1})\nonumber\\
		c&=&F(Y_{n}+\frac{\Delta t}{2}k_{2})\nonumber\\
		d&=&F(Y_{n}+\Delta t k_{3})\nonumber\\
		Y_{n+1}&=&Y_{n}+\frac{1}{6}\Delta t(a+2b+2c+d)
	\end{eqnarray}
	where $\Delta t$ is the time step. On the other hand, in order to obtain reference solutions numerically, we employ the method of integrating factors with the classical fourth order Runge-Kutta method applied to Equation (\ref{KBF}).
	\paragraph*{\bf Example} Let us examine Equation (\ref{KBF}) with the initial condition defined as 
	\begin{equation} \label{eq:iintcond1}
		y(x,0)=\frac{1}{2}+\frac{1}{4}\sin(x),
	\end{equation}
	under the assumption of periodic boundary conditions in the space domain $[0, 2\pi]$. The parameters are chosen as $\nu = 1$, $\mu = 1.0$, $\varepsilon = 1$, $\epsilon = 1$, and $\gamma = 1.0$. We perform the spatial discretization using $N=256$ collocation points. The error at the final time is measured using the 2-norm error. In Fig.~\ref{fig:ex1}, we show the performance of the Strang splitting method by plotting the solution error versus the number of steps. The observed convergence orders are presented in Table \ref{tab:horder_obs}.  The lines in Fig.~\ref{fig:ex1} and the values reported in Table~\ref{tab:horder_obs} verify the theoretical results of second-order convergence.
	
	\begin{figure}[H]
		\centering
		\pgfplotsset{every axis plot/.append style={line width=1.0pt, mark size=2pt},
			tick label style={font=\footnotesize},
			every axis/.append style={%
				minor x tick num=1,
				minor y tick num=4,
				minor z tick num = 2,
				scale only axis, 
				font=\footnotesize
			}
		}
		\setlength\figurewidth{.50\textwidth}
		\setlength\figureheight{.45\textwidth}
		\includegraphics[width=0.7\textwidth]{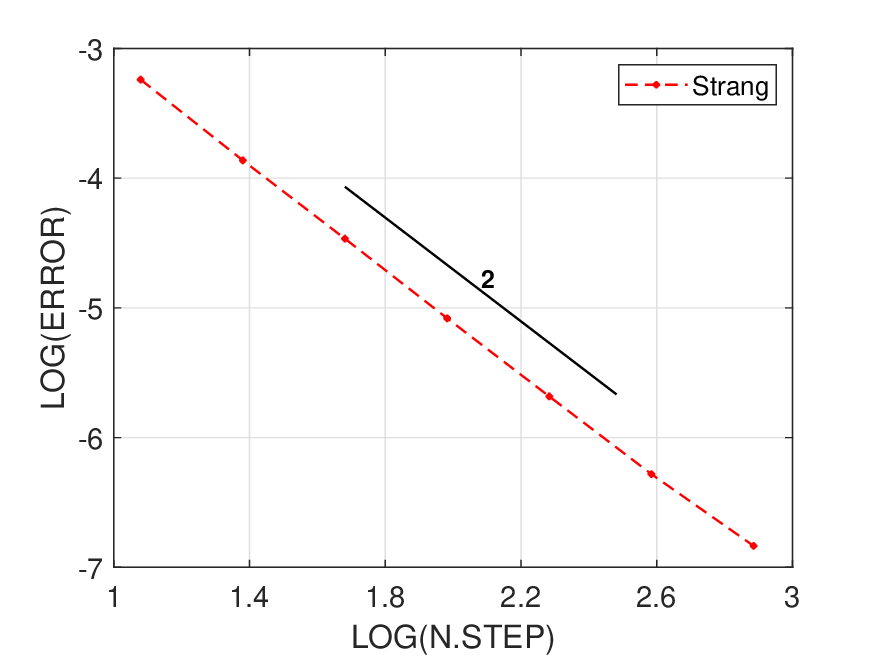}
		\caption{\label{fig:ex1} Solution error versus the number of steps at $t=1$.  }
	\end{figure}

	\begin{table*}[htbp]
		\caption{Convergence orders of the Strang splitting method}
		\centering
		\begin{tabular}{cc}
			\MyHead{2.3cm}{T/$\Delta t$} & \MyHead{2.3cm}{Strang} \\ [1ex]
			\cline{1-2}
			24  & 2.0693 \\
			48  & 2.0063 \\
			96  & 2.0381 \\
			192 & 2.0024 \\
			384 & 1.9899 \\ [1ex]
			\hline
		\end{tabular}
		\label{tab:horder_obs}
	\end{table*}
	
	\section{Conclusion}
	A numerical approximation scheme has been constructed efficiently by combining the Fourier collocation method in spatial discretization with the Strang splitting technique in time integration for the fifth-order Korteweg-de Vries-Burgers-Fisher equation (KBF). In the first part of the paper, the convergence of the time discrete scheme, which is constructed by Strang splitting, has been proved with the help of numerical quadratures using Lie commutator bounds under suitable regularity assumptions. Then, the convergence of the full discrete scheme has been successfully established by analyzing the global time discretization error and the Fourier interpolation error. Finally, the computational findings demonstrate that the proposed scheme ensures spatial accuracy together with second order temporal accuracy.

	%
	%
	%
	
\end{document}